\newtheorem{theorem}{Theorem}[section]
\newtheorem{lemma}[theorem]{Lemma}
\newtheorem{corollary}[theorem]{Corollary}
\newtheorem{proposition}[theorem]{Proposition}
\newtheorem{conjecture}[theorem]{Conjecture}
\newtheorem{definition}[theorem]{Definition}
\newenvironment{proof}{\normalsize {\sc Proof}.}{{\hfill $\Box$%
 \hskip - \parfillskip\bigskip}}
\newcommand{\Syl}{\mathop{\rm Syl}\nolimits}
\newcommand{\Stab}{\mathop{\rm Stab}\nolimits}
\newcommand{\Irr}{\mathop{\rm Irr}\nolimits}
\newcommand{\Hom}{\mathop{\rm Hom}\nolimits}
\newcommand{\Mor}{\mathop{\rm Mor}\nolimits}
\newcommand{\ord}{\mathop{\rm ord}\nolimits}
\newcommand{\Inn}{\mathop{\rm Inn}\nolimits}
\newcommand{\Pic}{\mathop{\rm Pic}\nolimits}
\newcommand{\Aut}{\mathop{\rm Aut}\nolimits}
\newcommand{\Out}{\mathop{\rm Out}\nolimits}
\def\cO{{\mathcal{O}}}
\def\cT{{\mathcal{T}}}
\def\cF{{\mathcal{F}}}
\newcommand{\NN} {\mathbb{N}}
\newcommand{\ZZ} {\mathbb{Z}}
\newcommand{\QQ} {\mathbb{Q}}
\newcommand{\SOF}{\mathop{\rm sf_\mathcal{O}}\nolimits}
\newcommand{\mf}{\mathop{\rm mf}\nolimits}
\newcommand{\mfO}{\mathop{\rm mf_\mathcal{O}}\nolimits}
\newcommand{\nth}{\mathop{\rm th}\nolimits}
\def\a{\alpha}
\def\bigcp{\mathop{\mathchoice 
 {\hbox{\sf\Large\lower 0.1\baselineskip\hbox{Y}}}%
 {\hbox{\sf\large\lower 0.1\baselineskip\hbox{Y}}}%
 {\hbox{\sf\normalsize\lower 0.1\baselineskip\hbox{Y}}}%
 {\hbox{\sf\tiny\lower 0.1\baselineskip\hbox{Y}}}%
}}
\def\bigtimes{\mathop{\mathchoice 
 {\hbox{\sf\Large\lower 0.1\baselineskip\hbox{X}}}%
 {\hbox{\sf\large\lower 0.1\baselineskip\hbox{X}}}%
 {\hbox{\sf\normalsize\lower 0.1\baselineskip\hbox{X}}}%
 {\hbox{\sf\tiny\lower 0.1\baselineskip\hbox{X}}}%
}}
\def\Sym(#1){\mathop{\rm Sym}(#1)}
\def\Sym(#1){S_{#1}}
\def\diag(#1){\mathop{\rm diag}(#1)}
\newenvironment{enumerate*}{%
 \begin{enumerate}%
 }%
 {\end{enumerate}}
\begin{document}

\title{Donovan's conjecture and extensions by the centralizer of a defect group\footnote{This research was supported by the EPSRC (grant no. EP/T004606/1)}}

\author{Charles W. Eaton\footnote{Department of Mathematics, University of Manchester, Manchester M13 9PL. Email: charles.eaton@manchester.ac.uk} and Michael Livesey\footnote{Department of Mathematics, University of Manchester, Manchester M13 9PL. Email: michael.livesey@manchester.ac.uk}}

\date{19th June 2020}
\maketitle


\begin{abstract}
We consider Donovan's conjecture in the context of blocks of groups $G$ with defect group $D$ and normal subgroups $N \lhd G$ such that $G=C_D(D\cap N)N$, extending similar results for blocks with abelian defect groups. As an application we show that Donovan's conjecture holds for blocks with defect groups of the form $Q_8 \times C_{2^n}$ or $Q_8 \times Q_8$ defined over a discrete valuation ring.
\end{abstract}


\section{Introduction}

Let $p$ be a prime and $k:=\overline{\mathbb{F}}_p$. Let $(K,\cO,k)$ be a $p$-modular system, so $\cO$ is a complete discrete valuation ring with residue field $k$. Donovan's conjecture states that for a given finite $p$-group $P$, there are only finitely many Morita equivalence classes amongst blocks of finite groups with defect groups isomorphic to $P$ (this may be stated over $k$ or $\mathcal{O}$). In reducing Donovan's conjecture to quasisimple groups, we must inevitably compare blocks $B$ of finite groups $G$ with those of normal subgroups $N$. The case where $N$ contains a defect group $D$ of $B$ was treated by K\"ulshammer in~\cite{ku95} for $k$-blocks, and by Eisele in~\cite{ei18} for $\mathcal{O}$-blocks. This paper concerns the problematic case of normal subgroups of index a power of $p$, where it suffices to assume $G=ND$. The subcase that $D$ is abelian was first considered, for $k$-blocks and with an additional splitting condition, in~\cite{kk96}. In~\cite{el19} the full $D$ abelian case was treated by introducing strong Frobenius numbers, related to the Morita Frobenius numbers introduced in~\cite{ke04}. The approach taken here also involves strong Frobenius numbers.

The purpose of this paper is to extend the reduction result above to the case that $G=C_D(D\cap N)N$. As an application, we show that Donovan's conjecture with respect to $\cO$ holds when $D \cong Q_8 \times C_{2^n}$ or $Q_8 \times Q_8$ for some $n$. Blocks with defect group $Q_{2^m} \times C_{2^n}$ were studied by Sambale in~\cite{sa13} and the number of irreducible characters and Brauer characters computed. Donovan's conjecture for $\cO$-blocks with defect group $Q_8$ was proved by Eisele in~\cite{ei16}.

In order to make canonical choices of $k$ and $\mathcal{O}$, we choose $k$ to be the algebraic closure of the field $\mathbb{F}_p$ of $p$-elements and $\cO$ to be the ring of Witt vectors for $k$. There is a discussion of this in~\cite{eel19}. When we need to differentiate the versions of Donovan conjecture, the $R$-Donovan conjecture will relate to blocks defined with respect to the ring $R$, where $R$ may be $k$ or $\cO$.

We also consider the following, based on a question of Brauer, which is now often referred to as the weak Donovan conjecture:

\begin{conjecture}
\label{Brauerconj}
Let $P$ be a finite $p$-group. There is $c \in \NN$ such that for all blocks $B$ of finite groups $G$ with defect groups isomorphic to $P$, the entries of the Cartan matrix of $B$ are at most $c$.
\end{conjecture}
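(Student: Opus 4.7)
Since Conjecture~\ref{Brauerconj} is a long-standing open problem, the goal here can only be to outline a reduction strategy rather than present a complete proof. The first observation is that the weak Donovan conjecture for a given $p$-group $P$ is implied by the $k$-Donovan conjecture for $P$: finitely many Morita equivalence classes of blocks with defect group $P$ yield only finitely many distinct Cartan matrices, whose entries are therefore bounded by some constant $c = c(P)$. So it suffices in principle to prove Donovan's conjecture itself.

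The standard attack on Donovan's conjecture proceeds, via Clifford theory, by reducing to blocks of quasisimple groups. Given a block $B$ of $G$ with defect group $D\cong P$ and a normal subgroup $N \lhd G$ with a covered block $b$ of $N$, one separates two regimes. When $D\leq N$, the results of K\"ulshammer for $k$-blocks and of Eisele for $\mathcal{O}$-blocks show that the Morita equivalence class of $B$ is controlled by that of $b$ together with group-theoretic data of bounded complexity. When $[G:N]$ is a power of $p$ one may assume $G = DN$; the abelian defect case was treated in~\cite{el19}, and the main theorem of the present paper extends this to $G = C_D(D\cap N)N$. Iterating these two reductions, one is left to prove the conjecture for blocks of quasisimple $G$, which would then be attacked by appealing to the classification of finite simple groups. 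A shortcut specific to the weak form is to bypass Morita equivalence entirely and try to bound Cartan entries directly through invariants such as $k(B)$, $l(B)$, and the Loewy length, since Conjecture~\ref{Brauerconj} is weaker than Donovan's and in some cases the Cartan entries admit combinatorial descriptions (e.g.\ via decomposition numbers for symmetric groups).

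The main obstacle is twofold. First, the reduction to quasisimple groups is still incomplete: neither~\cite{el19} nor the current paper handles arbitrary $p$-power index extensions, and removing the hypothesis $G = C_D(D\cap N)N$ appears to require essentially new input. Second, even once reduced to quasisimple groups, a uniform bound on Cartan entries in terms of the defect group alone remains open for several infinite families, most notably for groups of Lie type in non-defining characteristic when the defect group grows with the rank. In view of this, the only realistic short-term progress is to establish the conjecture for specific families of defect groups, in the spirit of the $Q_8 \times C_{2^n}$ and $Q_8 \times Q_8$ applications announced in the introduction, where the present reduction combines with known Morita-theoretic classifications of the quasisimple constituents to settle the Cartan bound along with the full Donovan conjecture.
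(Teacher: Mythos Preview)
The statement you were asked to address is a \emph{conjecture}, not a theorem; the paper states it as an open problem attributed to Brauer and does not attempt to prove it. There is consequently no proof in the paper against which to compare your proposal. You correctly recognise this at the outset and respond by sketching the known reduction strategy and its current limitations, which is the appropriate reaction. Your summary of the two Clifford-theoretic regimes (defect group contained in the normal subgroup versus $p$-power index) and of the role of the present paper's main theorem is accurate, and your identification of the obstacles---the incomplete reduction for general $p$-power index extensions and the lack of uniform Cartan bounds for quasisimple groups---matches the state of the art as described in the introduction. One small refinement: the paper does cite D\"uvel~\cite{du04} as having reduced Conjecture~\ref{Brauerconj} itself to quasisimple groups (with a caveat about the precise strength of the statement), so the reduction picture for the \emph{weak} form is somewhat better than your discussion suggests; the sharper obstruction lies with bounding Morita Frobenius numbers, which is the other half needed for the full Donovan conjecture.
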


In~\cite{ke04} Kessar showed that the $k$-Donovan conjecture is equivalent to showing Conjecture \ref{Brauerconj} and that the Morita Frobenius number (defined in $\S$\ref{sec:strong_frobenius}) of a block is bounded in terms of the order of the defect groups. Variations on the Morita Frobenius number for blocks defined over $\cO$ were given in~\cite{el19}, including the \emph{strong $\cO$-Frobenius number} $\SOF(B)$, and in~\cite{eel19} the analogue of Kessar's result was shown for blocks defined over $\cO$. In~\cite{du04} D\"uvel reduced Conjecture \ref{Brauerconj} to quasisimple groups (although the result in~\cite{du04} is not quite strong enough for our purposes as stated). Our first reduction result concerns the other half of the problem, i.e., bounding the strong $\cO$-Frobenius numbers in terms of the defect groups:

\begin{theorem}
\label{sfreduce:theorem}
Let $G$ be a finite group and $B$ be a block of $\cO G$ with defect group $D$. In addition, let $N \lhd G$ such that $G=C_D(D\cap N)N$ and let $b$ be a block of $\cO N$ covered by $B$. Then $\SOF(B) \leq \SOF(b)$.
\end{theorem}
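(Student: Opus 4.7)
My plan is to adapt the strategy of the abelian-defect case in~\cite{el19}, where the role previously played by $D$ itself is now played by $H:=C_D(D\cap N)$. First, a Fong--Reynolds reduction lets us assume that $b$ is $G$-stable: if $T=\Stab_G(b)$ and $B_T$ is the corresponding block of $\cO T$, then $B$ and $B_T$ are source algebra equivalent via a Galois-compatible equivalence, so $\SOF(B)=\SOF(B_T)$, and $T$ continues to satisfy $T=C_D(D\cap N)N$ after replacing the defect group by a suitable $G$-conjugate contained in $T$.

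With $b$ now $G$-stable, let $e$ be its block idempotent, so $B=\cO Ge$. Since $G=HN$ and $G/N \cong H/(H\cap N)$ is a $p$-group, $B$ has the structure of a crossed product of $b$ by $G/N$. The point of the centralizer hypothesis is that $H$ centralises the defect group $D\cap N$ of $b$, so its conjugation action on a suitably chosen source algebra of $b$ is by $(D\cap N)$-interior automorphisms; this makes the crossed product structure of $B$ over $b$ well controlled.

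Let $m=\SOF(b)$ and let $\sigma$ denote the corresponding Frobenius twist. By definition of $\SOF(b)$ there is a $(b,b^{(\sigma)})$-bimodule $M$ realising a Morita equivalence that is compatible with $\sigma$ in the sense of Section~\ref{sec:strong_frobenius}. I would then extend $M$ to a $(B,B^{(\sigma)})$-bimodule $\widetilde M$ by endowing it with a $G/N$-equivariant structure. Concretely, using the $(D\cap N)$-interior crossed product description of $B$ over $b$, the $H/(H\cap N)$-action on $b$ can be lifted through $M$ by $\sigma$-semilinear maps, and inducing up yields $\widetilde M$. Its Galois-compatibility follows from that of $M$, so $\SOF(B)$ divides $m$ and hence $\SOF(B)\leq \SOF(b)$.

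The main obstacle is verifying that the required $H/(H\cap N)$-equivariant structure on $M$ exists, i.e., that the K\"ulshammer--Puig-type obstruction to extending $M$ to $B$ vanishes. This is precisely where the hypothesis $G=C_D(D\cap N)N$ is essential: it forces $H$ to act on the source algebra of $b$ by $(D\cap N)$-interior automorphisms, so the obstruction is trivial up to an inner twist that can be absorbed into a new choice of $M$. In the abelian case of~\cite{el19}, $D$ itself centralises $D\cap N$ and this step is near-automatic; in general one must work with $H$ in place of $D$ and track the centrality of $D\cap N$ inside $H$ carefully at each stage.
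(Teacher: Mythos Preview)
Your outline has the right instinct---the hypothesis $G=C_D(D\cap N)N$ does force conjugation by elements of $H=C_D(D\cap N)$ to be inner on $b$, and this is indeed the heart of the matter---but there are two genuine gaps that prevent the argument from going through as written.

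First, $\SOF$ is defined via an \emph{algebra isomorphism} $\phi:B\to B^{(p^n)}$ whose restriction to $Z(B)$ coincides with $\zeta_G^{\circ n}$, not merely via a compatible Morita equivalence. Your bimodule $\widetilde M$, even if it exists, gives only a Morita equivalence; you do not argue that it is free of rank one on each side, nor that the induced map on centres is $\zeta_G^{\circ n}$. The phrase ``its Galois-compatibility follows from that of $M$'' is exactly the step that needs proof, and it does not follow formally from the construction you sketch. Relatedly, your Fong--Reynolds step is unnecessary: under the hypothesis $G=C_D(D\cap N)N$ one has $e_B=e_b$ directly (since $G/N$ is a $p$-group and $b$ is $G$-stable), so $b$ is $G$-stable from the outset.

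Second, and more seriously, you are missing the key technical ingredient that makes the extension well-defined and correctly compatible with the centre. The paper proceeds quite differently: Lemma~\ref{lem:graded_units} produces explicit units $a_{gN}\in (gb)^\times$ (using that conjugation by $g\in C_D(D\cap N)$ lies in $\Out_{D\cap N}(ibi)$, a $p'$-group, hence is inner on $b$) so that $B^N=\bigoplus_{gN}a_{gN}Z(b)$. The crucial new result is Proposition~\ref{prop:zeta_auto}, which shows that $\zeta_G$ induces a $\ZZ$-algebra automorphism of $(\ZZ G)^N$; its proof is non-trivial and occupies most of Section~\ref{sec:strong_frobenius}. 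One then extends the given strong Frobenius isomorphism $\phi:b\to b^{(p^n)}$ to $\tilde\phi:B\to B^{(p^n)}$ by setting $\tilde\phi(a_{gN})=\zeta_G^{\circ n}(a_{gN})$. By construction $\tilde\phi$ agrees with $\zeta_G^{\circ n}$ on all of $B^N\supseteq Z(B)$, which is precisely the condition for a strong Frobenius isomorphism. Your proposal has no analogue of Proposition~\ref{prop:zeta_auto}, and without it there is no mechanism for ensuring the extended map behaves correctly on the centre.
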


Combining with Lemma~\ref{Cartan_reduce:lemma}, we obtain the following:

\begin{theorem}
\label{Donovan reduce:theorem}
Let $P$ be a finite $p$-group. In order to verify Donovan's conjecture for $P$ for $\cO$-blocks, it suffices to check it for blocks of finite groups $G$ with defect group $D$ isomorphic to a subgroup of $P$ and no proper normal subgroup $N \lhd G$ such that $G=C_D(D\cap N)N$.
\end{theorem}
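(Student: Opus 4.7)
The plan is to bootstrap Theorem~\ref{sfreduce:theorem} and the companion Lemma~\ref{Cartan_reduce:lemma} using the $\cO$-analogue of Kessar's theorem proved in~\cite{eel19}. That result equates Donovan's conjecture for $\cO$-blocks with defect group $\cong P$ to a pair of uniform bounds, in terms of $P$ alone, on the Cartan invariants and on the strong $\cO$-Frobenius number $\SOF$. Thus it suffices to show that, under the stated hypothesis, these two invariants are bounded for every block of $\cO G$ whose defect group is isomorphic to a subgroup of $P$.

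Fix $P$. The hypothesis of the theorem, together with \cite{eel19} applied to each of the finitely many isomorphism types of subgroups of $P$, yields a constant $c=c(P)$ bounding both the Cartan entries and $\SOF$ for every block $B^\sharp$ of a finite group $G^\sharp$ with defect group $D^\sharp$ isomorphic to a subgroup of $P$ and such that $G^\sharp$ admits no proper normal subgroup $N^\sharp$ with $G^\sharp = C_{D^\sharp}(D^\sharp\cap N^\sharp)N^\sharp$. I would then argue by induction on $|G|$ that the same bound, after a uniformly controlled adjustment, governs every block $B$ of $\cO G$ whose defect group is isomorphic to a subgroup of $P$.

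For the inductive step, suppose $G$ admits a proper normal subgroup $N$ with $G=C_D(D\cap N)N$, and pick a block $b$ of $\cO N$ covered by $B$. Then $b$ has defect group $D\cap N$, still isomorphic to a subgroup of $P$, and $|N|<|G|$, so by the inductive hypothesis the required bounds are available for $b$. Theorem~\ref{sfreduce:theorem} gives $\SOF(B)\leq \SOF(b)$, while Lemma~\ref{Cartan_reduce:lemma} bounds the Cartan entries of $B$ in terms of those of $b$; any overhead depending on $[G:N]$ is absorbed since $G/N$ is a quotient of the $p$-group $C_D(D\cap N)$ and so has order at most $|P|$. Specialising to $D\cong P$ and invoking~\cite{eel19} once more gives Donovan's conjecture for $P$. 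The substantive content lies entirely in Theorem~\ref{sfreduce:theorem} and Lemma~\ref{Cartan_reduce:lemma}; the only genuine work in the present deduction is the inductive bookkeeping and the verification that the passage to a covered block of $N$ keeps the defect group inside $P$ up to isomorphism, so I do not anticipate any real obstacle beyond this.
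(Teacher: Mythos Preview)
Your approach is essentially the paper's: reduce Donovan to uniform bounds on Cartan invariants and on $\SOF$ via~\cite{eel19}, then propagate bounds from the ``irreducible'' class upward using Theorem~\ref{sfreduce:theorem} and Lemma~\ref{Cartan_reduce:lemma}, by induction (equivalently minimal counterexample) on $|G|$.

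There is one point where your write-up is too loose to stand as a proof, namely the Cartan side of the induction. You say the overhead from $[G:N]$ is ``absorbed'' because $|G/N|\leq |P|$, but as written your inductive hypothesis is just ``the bounds are available for $b$''. If that means ${\rm c}(b)\leq c'$ for some fixed $c'$, then Lemma~\ref{Cartan_reduce:lemma} only gives ${\rm c}(B)\leq |P|\,c'$, and you cannot close the induction. The fix, which is exactly what the paper does, is to sharpen the inductive statement to ${\rm c}(B)\leq |D|\,c$ where $D$ is the defect group of the particular block $B$. One checks (using $G=C_D(D\cap N)N$ and $D\leq G$) that $D=C_D(D\cap N)(D\cap N)$ and hence $[G:N]=[D:D\cap N]$; then Lemma~\ref{Cartan_reduce:lemma}(i) and the inductive bound ${\rm c}(b)\leq |D\cap N|\,c$ give ${\rm c}(B)\leq [D:D\cap N]\cdot |D\cap N|\,c=|D|\,c$, and the induction closes. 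The $\SOF$ side needs no such care since Theorem~\ref{sfreduce:theorem} has no loss factor. You should also cite Lemma~\ref{lem:graded_units} for the fact that $b$ has defect group $D\cap N$.
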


After analysing the blocks of quasisimple groups with defect groups $Q_8 \times C_{2^n}$ or $Q_8 \times Q_8$ in Section \ref{sec:classification_application} using~\cite{an20} (in the latter case there are none), we show:

\begin{theorem}
\label{Q8Ctheorem}
There are only finitely many Morita equivalence classes of $\cO$-blocks of finite groups with defect groups isomorphic to $Q_8 \times C_{2^n}$ or $Q_8 \times Q_8$, where $n \geq 0$.
\end{theorem}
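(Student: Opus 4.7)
The plan is to combine the reduction of Theorem~\ref{Donovan reduce:theorem} with previously established reductions of Donovan's conjecture, and to handle the residual class of cases using the quasisimple classification of Section~\ref{sec:classification_application} together with Eisele's theorem for defect $Q_8$~\cite{ei16}.

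Let $B$ be a block of $\cO G$ with defect group $D\cong Q_8\times C_{2^n}$ or $Q_8\times Q_8$. I would first invoke Theorem~\ref{Donovan reduce:theorem} to restrict attention to $G$ having no proper normal subgroup $N$ with $G=C_D(D\cap N)N$. Combined with the reductions of K\"ulshammer~\cite{ku95} and Eisele~\cite{ei18} for normal subgroups containing a defect group, with D\"uvel's reduction of the weak Donovan conjecture to quasisimple groups~\cite{du04}, and with the equivalence (from~\cite{ke04,eel19}) of the $\cO$-Donovan conjecture with the conjunction of bounded Cartan entries and bounded strong $\cO$-Frobenius numbers, the problem reduces to a tight structural hypothesis on $G$: every component of the layer $E(G)$ must contribute a $2$-block whose defect embeds in $D$.

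Next I would apply the classification obtained in Section~\ref{sec:classification_application} (via~\cite{an20}) of quasisimple groups admitting a $2$-block with defect isomorphic to a subgroup of $Q_8\times C_{2^n}$, respectively $Q_8\times Q_8$. In the $Q_8\times Q_8$ case this list is empty, hence $E(G)=1$ and $F^*(G)=O_2(G)Z(G)$; after the preceding reductions only a bounded $2$-solvable structure remains, and Donovan's conjecture then follows from Eisele~\cite{ei16} for defect $Q_8$ together with the standard abelian-quotient analysis. In the $Q_8\times C_{2^n}$ case the classification yields a finite list of quasisimple examples, and for each entry the combination of Eisele's $Q_8$-theorem with classical cyclic defect theory for the $C_{2^n}$-factor controls the Morita classes that extend up to~$G$.

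The main obstacle will be the assembly step: even after Theorem~\ref{Donovan reduce:theorem} has eliminated the centralizer-of-defect extensions, one must verify that the remaining automorphism extensions of the quasisimple components, and the way these glue to $O_2(G)Z(G)$, do not produce unboundedly many Morita classes. The strong $\cO$-Frobenius number side of this is controlled by iterating Theorem~\ref{sfreduce:theorem} together with~\cite{ei18}, while the Cartan side is bounded via~\cite{du04} supplemented by Lemma~\ref{Cartan_reduce:lemma} for the new extensions handled in this paper. Particular care is needed in the $Q_8\times C_{2^n}$ case to check that a component whose defect is a proper non-central subgroup of $Q_8$ (e.g.\ $C_4$) is handled correctly, since there the centralizer in $D$ is small and Theorem~\ref{Donovan reduce:theorem} gives less.
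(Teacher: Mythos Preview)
Your proposal has two genuine gaps, both at the ``assembly step'' you yourself flag.

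First, in the $Q_8\times Q_8$ case you write that the quasisimple list from Section~\ref{sec:classification_application} is empty and conclude $E(G)=1$. Proposition~\ref{none_in_quasisimples:prop} only says that no quasisimple group has a block with defect group \emph{equal} to $Q_8\times Q_8$ (or $(Q_8)^m\times A$ with $m>1$). The components of $G$ contribute blocks whose defect groups are \emph{subgroups} of $D$, and quasisimple groups certainly admit blocks with defect $Q_8$, $Q_8\times C_2$, $Q_8\times C_4$, $C_2\times C_2$, or $C_4\times C_4$. So $E(G)$ can be nontrivial, and in fact the paper's Proposition~\ref{structureQ8xC2n:prop}(b) shows exactly this: one is left with either a single component $N$ with $D\cap N\cong Q_8$ or $Q_8\times C_2$, or two commuting components $N_1,N_2$ each with such defect. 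These are the cases that actually require work (Lemma~\ref{noQ8xQ8ext:Lemma} is used precisely to exclude a component with defect $Q_8\times C_4$), and your outline does not address them.

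Second, in the $Q_8\times C_{2^n}$ case you claim Section~\ref{sec:classification_application} gives a \emph{finite} list of quasisimple examples. It does not: Proposition~\ref{none_in_quasisimples:prop} says the relevant blocks arise from (infinitely many) classical groups of Lie type, and that they are non-quasi-isolated. The paper deals with this by applying the Bonnaf\'e--Dat--Rouquier Morita equivalence~\cite{bdr17} to pass to a block of a strictly smaller Levi, then repeating the whole reduction (Proposition~\ref{reduce:prop} and Proposition~\ref{structureQ8xC2n:prop}) until one lands in the situation $G=ND$ with $D\cap N\cong Q_8$, where Eisele's theorem~\cite{ei16} and Theorem~\ref{sfreduce:theorem} finish the job (this is Corollary~\ref{sfQ8xC:cor}). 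Your phrase ``classical cyclic defect theory for the $C_{2^n}$-factor'' does not correspond to any available argument here; the $C_{2^n}$ is not a tensor factor at the block level. Also note that the paper deliberately avoids D\"uvel's reduction (see the introduction: it is ``not quite strong enough for our purposes as stated'') and uses Lemma~\ref{Cartan_reduce:lemma} directly instead.
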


The reader might ask what happens for some similar $p$-groups. We note that $Q_8 \times C_{2^n}$ and $Q_8 \times Q_8$ have particularly restricted subgroup structures, meaning that covered blocks of normal subgroups are either amenable to the application of Theorem \ref{sfreduce:theorem} or are easily dealt with by other methods. If, for example, the defect group were $Q_{16} \times C_{2^n}$, then we would have to consider the case of a normal subgroup with defect group $Q_8 \times C_{2^n}$, a case which does arise and to which Theorem \ref{sfreduce:theorem} would not apply. Similarly, $D_8 \times C_2$ is problematic as it contains a normal subgroup $(C_2)^3$. New methods will be needed for these cases. 

The structure of the paper is as follows. In Section \ref{sec:strong_frobenius} we treat strong Frobenius numbers and prove Theorem \ref{sfreduce:theorem}. We prove Theorem \ref{Donovan reduce:theorem} in Section \ref{Donovan_reduce:sec}. In Section \ref{sec:classification_application} we show that there are no blocks of quasisimple groups with defect group $Q_8 \times Q_8$, and few with defect group $Q_8 \times C_{2^n}$ for $n \geq 1$. We give some preliminary reductions and results about blocks with the above defect groups in Section \ref{sec:background}, and the proof that Donovan's conjecture holds for these blocks in Section \ref{sec:Donovan_cases}. 

\section{Strong Frobenius numbers and extensions by the centralizer of a defect group}\label{sec:strong_frobenius}

Throughout this section, let $G$ be a finite group and $B$ a block of $\cO G$ with defect group $D$. We denote by $\Irr(G)$ the set of irreducible characters of $G$ and $\Irr(B)$ the subset of $\Irr(G)$ of irreducible characters
lying in the block $B$. We write $kB$ for the block of $kG$ corresponding to $B$. We denote by $e_B\in \cO G$ the block idempotent for $B$ and by $e_\chi\in \QQ_c G$ the character idempotent for $\chi\in\Irr(G)$, where $\QQ_c$ is the universal cyclotomic extension of $\QQ$. If $A_1$ and $A_2$ are finitely generated $k$-algebras (respectively $\cO$-algebras), we write $A_1\sim_{\Mor}A_2$ if $A_1$ and $A_2$ are Morita equivalent as $k$-algebras (respectively $\cO$-algebras). We quote the following definition from~\cite[Definition 3.2]{el19}.

\begin{definition}
Let $q$ be a, possibly zero or negative, power of $p$. We denote by $-^{(q)}:k\to k$ the field automorphism given by $\lambda\to \lambda^{\frac{1}{q}}$. Let $A$ be a $k$-algebra. We define $A^{(q)}$ to be the $k$-algebra with the same underlying ring structure as $A$ but with a new action of the scalars given by $\lambda.a=\lambda^{(q)}a$, for all $\lambda\in k$ and $a\in A$. For $a\in A$ we define $a^{(q)}$ to be the element of $A$ associated to $a$ through the ring isomorphism between $A$ and $A^{(q)}$. Note that we have $kG\cong kG^{(q)}$ as we can identify $-^{(q)}: kG\to kG$ with the ring isomorphism:
\begin{align*}
-^{(q)}: kG&\to kG\\
\sum_{g\in G}\a_gg&\mapsto \sum_{g\in G}(\a_g)^qg.
\end{align*}

From now on, we identify $(kB)^{(q)}$ with the image of $kB$ under the above isomorphism. We define $B^{(q)}$ to be the unique block of $\cO G$ satisfying $k(B^{(q)})=(kB)^{(q)}$.

By an abuse of notation, we also use $-^{(q)}$ to denote the field automorphism of $\QQ_c$ defined by $\omega_p\omega_{p'}\mapsto \omega_p\omega_{p'}^{\frac{1}{q}}$, for all $p^{\nth}$-power roots of unity $\omega_p$ and $p'^{\nth}$ roots of unity $\omega_{p'}$ and also the ring automorphism
\begin{align*}
-^{(q)}: \QQ_c G&\to \QQ_c G\\
\sum_{g\in G}\a_gg&\mapsto \sum_{g\in G}(\a_g)^{(q^{-1})}g.
\end{align*}

If $\chi\in\Irr(G)$, then we define $\chi^{(q)}\in \Irr(G)$ to be given by $\chi^{(q)}(g) = \chi(g)^{(q^{-1})}$, for all $g\in G$. Note that if $\chi\in\Irr(B)$, then $(e_\chi)^{(q)}=e_{\chi^{(q)}}$ and $\chi^{(q)}\in\Irr(B^{(q)})$.
\end{definition}

For $R\in\{\ZZ,\QQ_c,\cO\}$, we define the $R$-linear map $\zeta_G:RG\to RG$, $g\mapsto g_pg_{p'}^p$, where $g_p$ and $g_{p'}$ are the $p$-part and $p'$-part of $g$ respectively. In general, the relevant ring $R$ should always be clear from the context. The following was proved in~\cite[Proposition 3.5]{el19}.

\begin{proposition}\label{prop:centre}
$\zeta_G$ restricted to $Z(\ZZ G)\subseteq \ZZ G$ induces a $\mathbb{Z}$-algebra automorphism of $Z(\ZZ G)$. Furthermore the algebra automorphism induced on $Z(\QQ_c G)\cong \QQ_c\otimes_{\ZZ} Z(\ZZ G)$ sends $e_\chi$ to $e_{\chi^{(p)}}$, for all $\chi\in \Irr(G)$.
\end{proposition}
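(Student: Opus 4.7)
The plan is to handle this in three stages: (i) show $\zeta_G$ restricts to a $\ZZ$-linear bijection of $Z(\ZZ G)$, (ii) compute the image under the $\QQ_c$-linear extension $\tilde\zeta$ of each primitive idempotent $e_\chi$, and (iii) deduce that the restriction $\zeta_G|_{Z(\ZZ G)}$ is multiplicative and hence a $\ZZ$-algebra automorphism.

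For (i) I would first observe that the set map $\zeta_G : G \to G$, $g \mapsto g_p g_{p'}^p$, is a bijection: it fixes $p$-parts and acts as $x \mapsto x^p$ on the $p'$-part, which is invertible on $p'$-elements (its inverse is the $p^{-1}$-power map taken modulo the relevant order). Since the decomposition into $p$- and $p'$-parts commutes with conjugation, $\zeta_G(xgx^{-1}) = x\zeta_G(g)x^{-1}$; and because $g_{p'}$ and $g_{p'}^p$ generate the same cyclic subgroup, $C_G(g) = C_G(g_p)\cap C_G(g_{p'}) = C_G(g_p)\cap C_G(g_{p'}^p) = C_G(\zeta_G(g))$. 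Hence each conjugacy class is sent bijectively to another class of the same size, so $\zeta_G$ permutes the $\ZZ$-basis of $Z(\ZZ G)$ given by class sums and restricts to a $\ZZ$-linear bijection of $Z(\ZZ G)$.

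For (ii) the heart of the matter is the character identity $\chi(\zeta_G(g)) = \chi^{(p)}(g)$ for every $\chi \in \Irr(G)$ and $g \in G$. To verify this I would simultaneously diagonalize the commuting operators $\rho(g_p)$ and $\rho(g_{p'})$, where $\rho$ affords $\chi$; the joint eigenvalues factor as $\omega_{p,i}\omega_{p',i}$ with $\omega_{p,i}$ a $p$-power root of unity and $\omega_{p',i}$ a $p'$-root, so $\rho(\zeta_G(g)) = \rho(g_p)\rho(g_{p'})^p$ has eigenvalues $\omega_{p,i}\omega_{p',i}^p$, which is exactly the effect on $\chi(g) = \sum_i \omega_{p,i}\omega_{p',i}$ of the Galois automorphism defining $\chi \mapsto \chi^{(p)}$. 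Given this identity, applying an arbitrary $\psi \in \Irr(G)$ (extended $\QQ_c$-linearly) to $\tilde\zeta(e_\chi) = (\chi(1)/|G|)\sum_g \chi(g^{-1})\zeta_G(g)$ converts it to $(\chi(1)/|G|)\sum_g \chi(g^{-1})\psi^{(p)}(g)$, and the second orthogonality relation (together with $\chi^{(p)}(1) = \chi(1)$) pins down the result as the unique central primitive idempotent $e_{\chi^{(p)}}$.

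Finally, (iii) is formal: $\tilde\zeta$ is $\QQ_c$-linear and permutes the full set of central primitive idempotents $\{e_\chi : \chi \in \Irr(G)\}$ of $Z(\QQ_c G) = \bigoplus_\chi \QQ_c e_\chi$ via $\chi \mapsto \chi^{(p)}$, so it is a $\QQ_c$-algebra automorphism of $Z(\QQ_c G)$. Its restriction to the $\ZZ$-subalgebra $Z(\ZZ G)$, which is preserved by (i), is therefore a $\ZZ$-algebra automorphism. I expect the main obstacle to be the eigenvalue-level identity in (ii) and, more subtly, carefully matching the Galois convention underlying $\chi^{(p)}$ so that orthogonality delivers $e_{\chi^{(p)}}$ rather than some other Galois conjugate idempotent.
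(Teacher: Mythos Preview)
The paper does not actually supply a proof of this proposition; it simply quotes it from \cite[Proposition 3.5]{el19}. Your three-step outline is exactly the standard argument one would expect there: $\zeta_G$ commutes with conjugation and preserves centralizers, hence permutes the class-sum basis of $Z(\ZZ G)$; the eigenvalue computation gives the character identity $\psi(\zeta_G(g))=\psi^{(p)}(g)$; and a $\QQ_c$-linear map permuting the primitive idempotents of $Z(\QQ_cG)\cong\prod_\chi \QQ_c$ is automatically a $\QQ_c$-algebra automorphism, hence its restriction to $Z(\ZZ G)$ is multiplicative. So the approach is sound.

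Your caveat about the Galois convention is well placed and in fact bites. With the definitions exactly as in the paper, the orthogonality step in (ii) yields
\[
\psi\bigl(\zeta_G(e_\chi)\bigr)=\frac{\chi(1)}{|G|}\sum_{g}\chi(g^{-1})\psi^{(p)}(g)=\chi(1)\,\delta_{\psi^{(p)},\chi}=\chi(1)\,\delta_{\psi,\chi^{(p^{-1})}},
\]
so $\zeta_G(e_\chi)=e_{\chi^{(p^{-1})}}$ rather than $e_{\chi^{(p)}}$. (A quick sanity check with $G=C_5$, $p=2$ confirms this: $\zeta_G(e_{\chi_1})=e_{\chi_3}$, while $\chi_1^{(2)}=\chi_2$.) This is a harmless direction-of-permutation issue and is invisible in the subsequent applications, where all that matters is that $\zeta_G$ permutes the $e_\chi$ by a fixed Galois twist; but it means your sentence ``orthogonality \dots\ pins down the result as $e_{\chi^{(p)}}$'' should read $e_{\chi^{(p^{-1})}}$ under the paper's conventions. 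Also, it is the first orthogonality relation you are invoking, not the second.
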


We now generalise the above Proposition to deal with normal subgroups of index a power of $p$. In what follows, $A^H$ will denote the set of fixed points in $A$ under the action of $H$, where $A$ is an algebra with an action of a group $H$. In practice, $A$ will always be the group algebra of $G$ or one of its blocks with the natural conjugation action of $H\leq G$. For each $n\in\mathbb{N}$, we use $\omega_n\in \QQ_c$ to denote some fixed primitive $n^{\nth}$ root of unity.

\begin{proposition}\label{prop:zeta_auto}
Let $N\lhd G$ be of index a power of $p$. Then $\zeta_G$ induces a $\ZZ$-algebra automorphism of $(\ZZ G)^N$.
\end{proposition}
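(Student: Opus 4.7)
The plan is to verify three properties of $\zeta_G$ restricted to $(\ZZ G)^N$: stability as a $\ZZ$-module, bijectivity, and multiplicativity. The first two are straightforward; the third is the main obstacle.

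For stability, conjugation by any $n\in N$ is a group automorphism of $G$ and therefore commutes with the canonical $p$-$p'$ decomposition: $(ngn^{-1})_p=ng_pn^{-1}$ and $(ngn^{-1})_{p'}=ng_{p'}n^{-1}$. Hence $\zeta_G(ngn^{-1})=n\zeta_G(g)n^{-1}$, so $\zeta_G$ commutes with $N$-conjugation and sends $N$-conjugacy classes of $G$ to $N$-conjugacy classes. For bijectivity, the key point is that $C_N(g)=C_N(\zeta_G(g))$ for every $g\in G$: since $p$ is coprime to $|g_{p'}|$, we have $\langle g_{p'}\rangle=\langle g_{p'}^p\rangle$. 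Combined with the set-theoretic bijectivity of $\zeta_G:G\to G$ (whose inverse sends $g$ to $g_pg_{p'}^{p^{-1}}$, with $p^{-1}$ taken modulo $|g_{p'}|$), this induces a bijection of the basis of $N$-class sums and hence a $\ZZ$-module automorphism of $(\ZZ G)^N$.

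For multiplicativity, the idea is to realize $\zeta_G$ as an Adams-type power operation. By the Chinese Remainder Theorem, choose $m\in\NN$ coprime to $|G|$ with $m\equiv 1\pmod{|G|_p}$ and $m\equiv p\pmod{|G|_{p'}}$; then $g^m=g_pg_{p'}^p=\zeta_G(g)$ for every $g\in G$, so on $\ZZ G$, $\zeta_G$ is the $\ZZ$-linear extension of the $m$-th power map. Since $|G/N|$ is a $p$-power dividing $|G|_p$, we have $m\equiv 1\pmod{|G/N|}$, so this operation preserves the cosets of $N$. To obtain multiplicativity, I would pass to $\QQ_cG$ and exploit the Clifford-theoretic Wedderburn decomposition $(\QQ_cG)^N\cong\bigoplus_{(\chi,\psi)}\End_{\QQ_c}(W_{\chi,\psi})$, indexed by pairs of a $\chi\in\Irr(G)$ with an irreducible constituent $\psi$ of $\chi|_N$, where $W_{\chi,\psi}$ is the corresponding multiplicity space. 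The central primitive idempotents of $(\QQ_cG)^N$ are the products $e_\chi\cdot e_\psi^N$, and applying Proposition~\ref{prop:centre} to both $G$ and $N$ (using $\zeta_G|_{\ZZ N}=\zeta_N$) gives $\zeta_G(e_\chi)=e_{\chi^{(p)}}$ and $\zeta_G(e_\psi^N)=e_{\psi^{(p)}}^N$, so $\zeta_G$ permutes these idempotents. The remaining technical point, which I expect to be the hardest, is verifying that the restriction of $\zeta_G$ to each Wedderburn component is a ring isomorphism; this could be established by an explicit Skolem--Noether argument or by comparing the action of $\zeta_G$ with a suitable Galois action on the multiplicity modules.
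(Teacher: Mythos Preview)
Your stability and bijectivity arguments are fine and essentially match the paper's. The gap is in multiplicativity, and it is a genuine one rather than just an omitted detail.

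First, your claim that $\zeta_G$ permutes the central primitive idempotents $e_\chi e_\psi$ of $(\QQ_cG)^N$ is circular. From $\zeta_G(e_\chi)=e_{\chi^{(p)}}$ and $\zeta_G(e_\psi)=e_{\psi^{(p)}}$ you cannot conclude $\zeta_G(e_\chi e_\psi)=e_{\chi^{(p)}}e_{\psi^{(p)}}$ without already knowing that $\zeta_G$ is multiplicative on such products; Proposition~\ref{prop:centre} gives multiplicativity only on $Z(\QQ_cG)$ and on $Z(\QQ_cN)$ separately, not on their product inside $(\QQ_cG)^N$. Second, your suggested remedies do not close the gap. Skolem--Noether classifies \emph{algebra} automorphisms of a matrix ring, but you have not yet shown that the restriction of $\zeta_G$ to a Wedderburn component is an algebra map---that is exactly the statement in question. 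The Galois comparison is also problematic: the ring automorphism $-^{(p)}$ of $\QQ_cG$ is semi-linear, whereas $\zeta_G$ is $\QQ_c$-linear, so although both send $e_\chi\mapsto e_{\chi^{(p)}}$ they do not coincide on $(\QQ_cG)^N$, and there is no evident way to transport multiplicativity from one to the other. Concretely, your observation that $\zeta_G(g)=g^m$ reduces the problem to showing that the multiset $\{(g'h')^m\}$ equals $\{g'^mh'^m\}$ as $g',h'$ range over appropriate $N$-classes, and this fails elementwise when $g',h'$ do not commute.

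The paper takes a different route. Rather than decomposing over pairs $(\chi,\psi)$ with $\chi\in\Irr(G)$, it decomposes $(\QQ_cG)^N=\bigoplus_{\chi\in\Irr(N)}(\QQ_cG)^Ne_\chi$, reduces to $\chi$ being $G$-stable, and then constructs an explicit $\QQ_c$-basis $\{e_{\chi,gN}\}_{gN\in G/N}$ of $(\QQ_cG)^Ne_\chi$ exhibiting it as a crossed product of the $p$-group $G/N$ with $\QQ_ce_\chi$. One computes directly that $\zeta_G(e_{\chi,gN})\approx e_{\chi^{(p)},gN}$ up to a $p$-power root of unity; since $\zeta_G$ fixes $p$-power roots of unity, multiplicativity then reduces to the purely group-theoretic relation $e_{\chi,gN}e_{\chi,hN}\approx e_{\chi,ghN}$, which is proved by induction on the upper central series of the nilpotent group $G/N$. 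It is precisely this crossed-product structure and the nilpotence of $G/N$ that make the argument go through; your Wedderburn decomposition over $\Irr(G)$ does not give access to either ingredient.
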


\begin{proof}
As noted in the proof of~\cite[Proposition 3.5]{el19}, $\zeta_G$ commutes with conjugation by any $g\in G$. Therefore, since $(\ZZ G)^N$ has a $\ZZ$-basis consisting of the $N$-conjugacy class sums of $G$, $\zeta_G$ maps $(\ZZ G)^N$ to itself. Hence, it is sufficient to prove that $\zeta_G$ induces a $\QQ_c$-algebra automorphism of $(\QQ_c G)^N\cong \QQ_c\otimes_{\ZZ}(\ZZ G)^N$. We do this by showing that $\zeta_G$ induces an isomorphism
\begin{align*}
(\QQ_c G)^Ne_\chi\to(\QQ_c G)^Ne_{\chi^{(p)}},
\end{align*}
for all $\chi\in\Irr(N)$. Note that $\Stab_G(\chi)=\Stab_G(\chi^{(p)})$ and if $g\notin\Stab_G(\chi)$, then
\begin{align*}
C_g^Ne_\chi=e_\chi C_g^Ne_\chi=C_g^Ne_{\chi^g}e_\chi=0,
\end{align*}
where $C_g^N\in\mathbb{Z}G$ is the sum of the elements in the $N$-conjugacy class containing $g$. We may, therefore, assume that $G=\Stab_G(\chi)$. For each $g\in G$ we define
\begin{align*}
e_{\chi,gN}:=\sum_{i=1}^{\ord(gN)}\omega_{\ord(gN)}^i e_{\chi'.\lambda^i},
\end{align*}
where $\chi'$ is an extension of $\chi$ to $\langle g\rangle N$ and $\lambda$ is the linear character of $\langle g\rangle N$ given by
\begin{align*}
\lambda:\langle g\rangle N&\to\QQ_c\\
g^iN&\mapsto\omega_{\ord(gN)}^i.
\end{align*}
If we define the $\QQ_c$-algebra automorphism
\begin{align*}
\tilde{\lambda}:\QQ_c (\langle g\rangle N)&\to \QQ_c (\langle g\rangle N)\\
h&\mapsto \lambda(h)h,
\end{align*}
for all $h\in \langle g\rangle N$, then
\begin{align}\label{algn:lambda_auto}
\begin{split}
\tilde{\lambda}(e_{\chi,gN})=&\sum_{i=1}^{\ord(gN)}\omega_{\ord(gN)}^i \tilde{\lambda}(e_{\chi'.\lambda^i})=\sum_{i=1}^{\ord(gN)}\omega_{\ord(gN)}^i e_{\chi'.\lambda^{i-1}}\\
=&\sum_{i=1}^{\ord(gN)}\omega_{\ord(gN)}^{i+1} e_{\chi'.\lambda^i}=\omega_{\ord(gN)}e_{\chi,gN}.
\end{split}
\end{align}
Therefore, $e_{\chi,gN}\in g(\QQ_cNe_\chi)$. Since we are making a choice of extension $\chi'$ of $\chi$, $e_{\chi,gN}$ is only defined uniquely up to multiplication by some power of $\omega_{\ord(gN)}$. With this in mind, we introduce the notation of $\alpha\approx\beta$, for $\alpha$, $\beta\in(\QQ_c G)^N$, if $\alpha=\mu\beta$ for some $p^{\nth}$-power root of unity $\mu\in\QQ_c$. Note that if when defining $e_{\chi,g^{-1}N}$ we choose the same extension $\chi'$ of $\chi$ as when defining $e_{\chi,gN}$, i.e.,
\begin{align*}
e_{\chi,g^{-1}N}:=\sum_{i=1}^{\ord(gN)}\omega_{\ord(gN)}^i e_{\chi'.\lambda^{-i}}=\sum_{i=1}^{\ord(gN)}\omega_{\ord(gN)}^{-i} e_{\chi'.\lambda^i},
\end{align*}
then
\begin{align*}
e_{\chi,gN}e_{\chi,g^{-1}N}=\sum_{i=1}^{\ord(gN)} e_{\chi'.\lambda^i}=e_\chi.
\end{align*}
Therefore, for any choice of $e_{\chi,g^{-1}N}$, $e_{\chi,gN}e_{\chi,g^{-1}N}\approx e_\chi$ and so we have shown that $(\QQ_c G)^Ne_\chi$ is a crossed product of $G/N$ with $Z(\QQ_c Ne_\chi)=\QQ_c e_\chi$ in the sense of K\"ulshammer~\cite{ku95}. In other words, $(\QQ_c G)^Ne_\chi=\bigoplus_{gN\in G/N}\QQ_c e_{\chi,gN}$. Now
\begin{align}\label{algn:idmpt}
\begin{split}
\zeta_G(e_{\chi,gN})=&\sum_{i=1}^{\ord(gN)}\omega_{\ord(gN)}^i \zeta_G(e_{\chi'.\lambda^i})=\sum_{i=1}^{\ord(gN)}\omega_{\ord(gN)}^i e_{(\chi'.\lambda^i)^{(p)}}\\
=&\sum_{i=1}^{\ord(gN)}\omega_{\ord(gN)}^i e_{\chi'^{(p)}.\lambda^i}\approx e_{\chi^{(p)},gN},
\end{split}
\end{align}
where the second equality follows from Proposition~\ref{prop:centre}. Therefore, $\zeta_G$ induces a $\QQ_c$-vector space isomorphism between $(\QQ_c G)^Ne_\chi$ and $(\QQ_c G)^Ne_{\chi^{(p)}}$.

Before proceeding we note that, for all $g,h\in G$, $(e_{\chi,gN})^{\ord(gN)}=e_\chi$. Also, as noted just after (\ref{algn:lambda_auto}), $e_{\chi,gN}=gxe_\chi$, for some $x\in (\QQ_c Ne_\chi)^\times$ and so $e_{\chi,gN}e_{\chi,hN}e_{\chi,gN}^{-1}=ge_{\chi,hN}g^{-1}\approx e_{\chi,{}^ghN}$. We use both these facts below in (\ref{algn:mult}).

It remains to show that for all $g,h\in G$,
\begin{align*}
\zeta_G(e_{\chi,gN}e_{\chi,hN})=\zeta_G(e_{\chi,gN})\zeta_G(e_{\chi,hN}).
\end{align*}
By (\ref{algn:idmpt}), $\zeta_G(e_{\chi,gN})=e_{\chi,gN}^{(p)}$ and $\zeta_G(e_{\chi,hN})=e_{\chi,hN}^{(p)}$ and so we need only show that $\zeta_G(e_{\chi,gN}e_{\chi,hN})=(e_{\chi,gN}e_{\chi,hN})^{(p)}$. Since $\zeta_G$ fixes coefficients that are $p^{\nth}$-power roots of unity, it suffices in turn to prove that $e_{\chi,gN}e_{\chi,hN}\approx e_{\chi,ghN}$. We prove this last statement via induction on the lowest layer of $gN$, $hN$ or $ghN$ in the upper central series of $G/N$. Note that
\begin{align*}
e_{\chi,gN}e_{\chi,hN}\approx e_{\chi,ghN}\Leftrightarrow e_{\chi,hN}e_{\chi,(gh)^{-1}N}\approx e_{\chi,g^{-1}N}\Leftrightarrow e_{\chi,(gh)^{-1}N}e_{\chi,gN}\approx e_{\chi,h^{-1}N}
\end{align*}
and so we may assume that $hN$ is in the lowest layer. If $h\in N$, then $e_{\chi,hN}=e_\chi$ and so $e_{\chi,gN}e_{\chi,hN}=e_{\chi,ghN}$. Now let $g,h$ be arbitrary elements of $G$ and set $p^n:=\max\{\ord(gN),\ord(hN),\ord(ghN)\}$. Then
\begin{align}\label{algn:mult}
\begin{split}
(e_{\chi,gN}e_{\chi,hN})^{p^n}&\approx (e_{\chi,gN})^{p^n}(e_{\chi,h^{g^{p^n-1}}N}\dots e_{\chi,h^gN} e_{\chi,hN})\approx (e_{\chi,h^{g^{p^n-1}}N}\dots e_{\chi,h^gN})(e_{\chi,h^{-1}N})^{p^n-1}\\
&\approx (e_{\chi,h^{g^{p^n-1}}N}e_{\chi,h^{-1}N})\dots(e_{\chi,h^{gh^{-(p^n-2)}}N}e_{\chi,h^{-1}N})\\
&\approx e_{\chi,[g^{-(p^n-1)},h]N}\dots e_{\chi,[h^{p^n-2}g^{-1},h]N}\approx e_{\chi,[g^{-(p^n-1)},h]\dots[h^{p^n-2}g^{-1},h]N}\\
&=e_{\chi,(gh)^{p^n}N}=e_\chi,
\end{split}
\end{align}
where the fourth and fifth relations follow from the inductive hypothesis. Now, since $e_{\chi,gN}e_{\chi,hN}\in gh(\QQ_c Ne_\chi)\cap(\QQ_c G)^N e_\chi=\QQ_c e_{\chi,ghN}$, we have $e_{\chi,gN}e_{\chi,hN}\approx e_{\chi,ghN}$.
\end{proof}

For the following definitions see~\cite{ke04} and~\cite{el19}.

\begin{definition}
The \textbf{Morita Frobenius number} $\mf(A)$ of a finite dimensional $k$-algebra $A$ is the smallest integer $n$ such that $A\sim_{\Mor} A^{(p^n)}$ as $k$-algebras. The \textbf{$\cO$-Morita Frobenius number} $\mfO(B)$ of $B$ is the smallest integer $n$ such that $B\sim_{\Mor} B^{(p^n)}$ as $\cO$-algebras. The \textbf{strong $\cO$-Frobenius number} $\SOF(B)$ of $B$ is the smallest integer $n$ such that there exists an $\cO$-algebra isomorphism $\phi:B\to B^{(p^n)}$ with the induced bijection of characters given by $\chi\mapsto \chi^{(p^n)}$, for all $\chi\in\Irr(B)$ or, equivalently, that $\phi$ restricted to $Z(B)$ coincides with $\zeta_G^{\circ n}$. Such an isomorphism $\phi$ is called a \textbf{strong Frobenius isomorphism of degree $n$}.
\end{definition}




Before the next lemma we need to give a brief overview of Picard groups of blocks. For a more detailed discussion see~\cite[$\S1$]{bkl18}.

Let $A$ be an $\cO$-algebra. The Picard group $\Pic(A)$ of $A$ consists of isomorphism classes of $A$-$A$-bimodules which induce $\cO$-linear Morita auto-equivalences of $A$. $\Pic(A)$ forms a group with the group multiplication given by tensoring over $A$. For each $\a\in\Aut(A)$ we define ${}_\a A$ to be the $A$-$A$-bimodule with the canonical right action of $A$ and the left action of $A$ given via $\a$. Moreover, ${}_\a A\cong A$ as bimodules if and only if $\a\in\Inn(A)$. In other words, we can view $\Out(A)=\Aut(A)/\Inn(A)$ as a subgroup of $\Pic(A)$.

Now set $A:=iBi$, where $i\in B^D$ is a source idempotent of $B$. We identify $D$ with its image in $A$ and denote by $\Aut_D(A)$ the group of $\cO$-algebra automorphisms of $A$ which fix $D$ pointwise. In addition, we set $\Out_D(A)$ to be the image of $\Aut_D(A)$ in $\Out(A)$, in other words $\Out_D(A)$ is the quotient of $\Aut_D(A)$ by the subgroup of inner automorphisms induced by conjugation with elements in $(A^D)^\times$. We set $\cT(B)$ to be the subgroup of bimodules in $\Pic(B)$ with trivial source, when considered as $\cO(G\times G)$-modules. Through the natural Morita equivalence between $B$ and $A$, we may identify $\Out_D(A)$ with the subgroup of $\cT(B)$ consisting of bimodules with vertex $\Delta D$.

The following Lemma and Theorem~\ref{sfreduce:theorem} were shown for $D$ abelian in~\cite[Theorems 3.15, 3.16]{el19}:

\begin{lemma}\label{lem:graded_units}
Let $N\lhd G$ such that $G=C_D(D\cap N)N$ and $b$ a block of $\cO N$ covered by a block $B$ of $\cO G$ with defect group $D$. Then $e_B=e_b$ and $b$ has defect group $D\cap N$. Viewing $b$ as a subalgebra of $B$, there is a choice of $a_{gN}\in (gb)^\times$ for each $gN\in G/N$ such that $B^N=\bigoplus_{gN\in G/N}a_{gN}Z(b)$.
\end{lemma}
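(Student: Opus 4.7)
The first two claims, $e_B=e_b$ and that $D\cap N$ is a defect group of $b$, follow from standard block theory applied to a normal subgroup of $p$-power index. Since $G = C_D(D\cap N) N \subseteq DN$, we have $G=DN$ and hence $G/N \cong D/(D\cap N)$ is a $p$-group. A defect group of $B$ stabilizes every block of $N$ covered by $B$, so $G = DN$ forces $b$ to be $G$-stable, giving $e_b = e_B$; the standard description of defect groups in this setting then yields $D \cap N$ as a defect group of $b$.

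From $e_B=e_b$ and the coset decomposition $\cO G = \bigoplus_{gN\in G/N} g \cO N$, we obtain $B = \bigoplus_{gN\in G/N} gb$. Conjugation by $N$ preserves each summand (since $b$ is $N$-stable and $N\lhd G$), so $B^N = \bigoplus_{gN\in G/N} (gb)^N$. A direct calculation with $n(gy)n^{-1}$ for $n\in N$, $y\in b$, shows that a unit $a_{gN}=gy\in gb$ fixed by $N$-conjugation exists if and only if $y\in b^\times$ realizes $c_{g^{-1}}|_b$ as an inner automorphism of $b$. Once such an $a_{gN}$ is chosen, left multiplication by $a_{gN}$ yields a bijection $Z(b)=b^N\to (gb)^N$, and hence $(gb)^N = a_{gN} Z(b)$; summing over $gN\in G/N$ gives the asserted decomposition.

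The substantive task is therefore to show that $c_g$ is inner on $b$ for every $g \in G$. For $g \in N$ this is witnessed by $ge_b \in b^\times$. Since $G = C_D(D\cap N) N$, it remains to treat $g \in C_D(D\cap N)$; here $g$ is a $p$-element of $C_G(D\cap N) \cap N_G(b)$. My plan is to translate the question into the Picard-theoretic setup of the paragraph preceding the lemma: the bimodule $gb \cong {}_{c_{g^{-1}}} b$ lies in $\cT(b)$, and because $g$ centralizes $D\cap N$ its vertex is contained in $\Delta(D\cap N)$; thus $gb$ determines a class in $\Out_{D\cap N}(A')$, where $A'=jbj$ is a source algebra of $b$, and the goal is that this class is trivial.

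The triviality of this class is the main obstacle. I would attack it via the source idempotent $i \in B^D$ of $B$, which is in particular fixed by $C_D(D\cap N)$ under conjugation. In the source algebra $A=iBi$ the element $gi \in A^\times$ realizes conjugation by $g$ on $ibi \subseteq A$; coordinating this with a source idempotent of $b$ (using $g \in C_D(D\cap N)$ to ensure compatibility) should transfer $c_g|_b$ to an explicit inner automorphism of $b$, giving the required unit $a_{gN}$. The abelian-defect analogue is \cite[Theorem 3.15]{el19}, which the present argument should parallel, with $C_D(D\cap N)$ playing the role played there by $D$.
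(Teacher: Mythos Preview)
Your setup is correct and matches the paper: the first two claims follow from $G/N$ being a $p$-group and $b$ being $G$-stable, and you correctly reduce the decomposition $B^N=\bigoplus a_{gN}Z(b)$ to the assertion that $c_g|_b$ is inner for each $g\in C_D(D\cap N)$. You also correctly place the problem in $\cT(b)$ and identify that the bimodule ${}_{c_{g^{-1}}}b$ has vertex $\Delta(D\cap N)$, hence determines a class in $\Out_{D\cap N}(A')$ with $A'=jbj$.

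The gap is in your proposed method for showing this class is trivial. Your idea is to pick a source idempotent $i\in B^D$ of $B$ and use that $gi\in (iBi)^\times$ realises $c_g$ on $ibi$. But $gi$ lies in $gibi\subseteq iBi$, not in $ibi$ itself; a unit of the ambient algebra $iBi$ conjugating $ibi$ to itself does not witness innerness of $c_g$ on $ibi$, let alone on $b$. The vague step ``coordinating this with a source idempotent of $b$'' is exactly where the difficulty sits, and nothing in your outline explains how to bridge it. There is no obvious reason why $i$ should be (or refine to) a source idempotent of $b$, and even if it did, you would still need a unit \emph{inside} $jbj$ realising $c_g$.

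The paper's argument at this point is entirely different and much cleaner: having landed in $\Out_{D\cap N}(A')$, one invokes Puig's result (\cite[14.5, Proposition~14.9]{pu88}) that $\Out_{D\cap N}(A')$ is a $p'$-group. Since $g\in D$ has $p$-power order, the class of $c_g$ in $\Out_{D\cap N}(A')$ has simultaneously $p$-power order and $p'$-order, hence is trivial. This $p$-versus-$p'$ trick is the missing idea in your proposal.
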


\begin{proof}
By~\cite[Corollary 5.5.6]{nats88}, $B$ is the unique block of $\cO G$ covering $b$ and so $e_B$ is the sum of $e_b$ and its $G$-conjugates. However,~\cite[Theorem 5.5.10(v)]{nats88} gives that $b$ is $G$-stable, so that $e_B=e_b$ and $D \cap N$ is a defect group for $b$. 

Since $B^N$ is $G/N$-graded, it remains to find a unit $a_{gN}$ in the $gN$-graded component of $B^N$, for each $gN\in G/N$. Let $g\in C_D(D\cap N)$ and consider $c_g\in\Aut(B)$ given by conjugation by $g$. Now $c_g$ induces the element
\begin{align*}
M:=\cO_{\Delta c_g}\uparrow^{N\times N}e_b\in\Pic(b),
\end{align*}
where $\Delta c_g=\{(h,c_g(h))|h\in N\}\leq N\times N$ and $\cO_{\Delta c_g}$ is the trivial $\cO(\Delta c_g)$-module. In particular, $M\in\cT(b)$ and so, by~\cite[Theorem 1.1(i)]{bkl18}, $M$ has vertex $\{(h,c_g(h))|h\in D\cap N\}=\Delta(D\cap N)$. Therefore, by the comments preceding the lemma, $iMi\in\Out_{D\cap N}(ibi)$, where $i$ is a source idempotent for $b$. Now by~\cite[14.5, Proposition 14.9]{pu88}, $\Out_{D\cap N}(ibi)$ is a $p'$-subgroup of $\cT(b)$. In particular, $iMi$ has $p'$-order in $\Out(ibi)$ or, equivalently, $M$ has $p'$-order in $\Out(b)$. However, since $g\in D$, $c_g$ has order a power of $p$ meaning $M$ induces the trivial auto-equivalence and $c_g\in \Inn(b)$.

Let $gN$ be a left coset of $N$ in $G$, where we choose coset representative $g\in C_D(D\cap N)$. Set $\a_g\in b^\times$ such that $c_g$ is given by conjugation by $\a_g$. We set $a_{gN}:=g(\a_g)^{-1}$.
\end{proof}

Note that, if we assume the stronger condition $D=(D\cap N)Z(D)$, then $B^N$ can be replaced with $Z(B)$ in Lemma~\ref{lem:graded_units}. The result would then be proved via induction on $|G/N|$, since when $G/N$ is cyclic the $a_{gN}$'s constructed above are in $Z(B)$. We now prove Theorem~\ref{sfreduce:theorem}.
\newline

{\sc Proof of Theorem}~\ref{sfreduce:theorem}.
Set $n:=\SOF(b)$ and let $\phi:b\to b^{(p^n)}$ be a strong $\cO$-Frobenius isomorphism of degree $n$. Then, by Proposition~\ref{prop:zeta_auto}, we can extend $\phi$ to an isomorphism $\tilde{\phi}:B\to B^{(p^n)}$ by sending $a_{gN}$ to $\zeta_G^{\circ n}(a_{gN})$, for each left coset $gN$ of $N$ in $G$ and the $a_{gN}$'s are as in Lemma~\ref{lem:graded_units}. By construction, $\tilde{\phi}$ agrees with $\zeta_G^{\circ n}$ on $B^N$ so certainly it does on $Z(B)$ and we have $\SOF(B)\leq n$.

In the case $D=(D\cap N)Z(D)$, the proof of Theorem~\ref{sfreduce:theorem} can be greatly shortened as we may avoid reference to Proposition~\ref{prop:zeta_auto}. Indeed, given the comments following Lemma~\ref{lem:graded_units}, we only need that $\zeta_G$ induces an $\cO$-algebra automorphism between $Z(B)$ and $Z(B^{(p)})$. The proof then follows in very much the same vein as the abelian defect group case in~\cite[Theorems 3.16]{el19}.\hfill $\Box$


\section{A reduction theorem for Donovan's conjecture}
\label{Donovan_reduce:sec}

The following reduction result for Cartan invariants is presumably well-known, but we provide a proof. Write ${\rm c}(B)$ for the largest entry of the Cartan matrix of a block $B$.

\begin{lemma}
\label{Cartan_reduce:lemma}
Let $G$ be a finite group and $B$ a block of $kG$.
\begin{enumerate}[(i)]
    \item Let $N \lhd G$ have index $p^r$ and suppose $B$ covers a block $b$ of $kN$. Then ${\rm c}(B) \leq p^r {\rm c}(b)$.
    \item Let $Z\leq Z(G)$ be a $2$-group and $\overline{B}$ the corresponding block of $G/Z$. Then ${\rm c}(\overline{B}) \leq {\rm c}(B)$.
\end{enumerate}

\end{lemma}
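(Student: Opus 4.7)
For part (i), the plan is to combine Clifford theory with Frobenius reciprocity and Mackey's formula for a normal subgroup. For each $\phi\in\IBr(B)$ choose a constituent $\psi$ of $\phi\downarrow_N$; since $B$ covers $b$, $\psi$ lies in some $G$-conjugate $b'$ of $b$. Frobenius reciprocity shows $\phi$ is a quotient of $Q_\psi\uparrow^G$, where $Q_\psi$ denotes the projective cover of $\psi$ in $kN$, so the projective cover $P_\phi$ is a direct summand of the projective $kG$-module $Q_\psi\uparrow^G$. Choosing $\psi'$ similarly for $\phi'\in\IBr(B)$, one gets
\begin{align*}
c^B_{\phi,\phi'}=\dim\Hom_{kG}(P_\phi,P_{\phi'})\leq\dim\Hom_{kG}(Q_\psi\uparrow^G,Q_{\psi'}\uparrow^G).
\end{align*}
Frobenius reciprocity together with Mackey (so $Q_{\psi'}\uparrow^G\downarrow_N=\bigoplus_{g\in G/N}{}^gQ_{\psi'}$) rewrites the right-hand side as $\sum_{g\in G/N}c^{kN}_{\psi,{}^g\psi'}$. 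Each summand vanishes unless $\psi$ and ${}^g\psi'$ lie in the same block of $kN$, in which case it is a Cartan entry of a block $G$-conjugate to $b$, hence at most $c(b)$ by conjugation-invariance. Summing over the $p^r$ cosets yields the desired bound.

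For part (ii), the starting point is that since $Z$ is a central $p$-subgroup (the paper is at $p=2$), Schur's lemma forces $Z$ to act trivially on every absolutely simple $kG$-module: the action is by scalars, and these scalars must be $p$-power roots of unity in $k$, hence equal to $1$. Consequently $\IBr(B)=\IBr(\overline{B})$, and for each $\phi\in\IBr(B)$ the projective cover in $k(G/Z)$ is $\overline{P}_\phi\cong P_\phi/I_ZP_\phi$, where $I_Z$ denotes the augmentation ideal of $kZ$. Applying the exact functor $\Hom_{kG}(P_\phi,-)$ to the short exact sequence of $kG$-modules
\begin{align*}
0\to I_ZP_{\phi'}\to P_{\phi'}\to\overline{P}_{\phi'}\to 0,
\end{align*}
and noting that any $kG$-map $P_\phi\to\overline{P}_{\phi'}$ factors through $P_\phi/I_ZP_\phi=\overline{P}_\phi$ (because $\overline{P}_{\phi'}$ is annihilated by $I_Z$), one obtains
\begin{align*}
c^B_{\phi,\phi'}=c^{\overline{B}}_{\phi,\phi'}+\dim\Hom_{kG}(P_\phi,I_ZP_{\phi'})\geq c^{\overline{B}}_{\phi,\phi'}.
\end{align*}

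Neither part looks likely to pose a serious obstacle. In (i) the real work is just setting up the induced projectives correctly so that Mackey applies cleanly and then exploiting conjugation-invariance of Cartan entries within a $G$-orbit of blocks; in (ii) the argument rests on the identification $\overline{P}_\phi=P_\phi/I_ZP_\phi$ and on the factorization of maps into inflated modules, after which the inequality is immediate from the exact sequence.
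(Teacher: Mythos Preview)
Your proof is correct in both parts, but your route in part~(i) is genuinely different from the paper's. The paper first applies a Fong--Reynolds/K\"ulshammer reduction (\cite[Theorem C]{ku81}) to assume $b$ is $G$-stable with $e_B=e_b$, and then invokes Green's indecomposability theorem to conclude that every projective indecomposable $B$-module is of the form $M\uparrow^G$ for $M$ a projective indecomposable $b$-module; the bound follows by counting composition factors of $(M\uparrow^G)\downarrow_N$, which is a sum of $p^r$ conjugates of $M$. Your argument bypasses both the Morita reduction and Green's theorem entirely: you only need that $P_\phi$ is a summand of an induced projective, and then the summand inequality for $\Hom$, Frobenius reciprocity, and Mackey do the rest. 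Your approach is more elementary and works directly without passing to the stabilizer; the paper's approach, on the other hand, gives the sharper structural information that (after the reduction) the PIMs of $B$ are \emph{exactly} the induced PIMs of $b$.

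For part~(ii) the two arguments are essentially dual: the paper takes $Z$-fixed points ${}^ZP_\phi$ and observes this is the projective cover of $S$ over $k(G/Z)$, while you take $Z$-coinvariants $P_\phi/I_ZP_\phi$. Your exact-sequence formulation makes the inequality $c^B_{\phi,\phi'}\geq c^{\overline B}_{\phi,\phi'}$ completely explicit, whereas the paper leaves the final comparison implicit.
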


\begin{proof}
\begin{enumerate}[(i)]
\item Let $B'$ be the unique block of $k\Stab_G(b)$ covering $b$. Then, by~\cite[Theorem C]{ku81}, $B'\sim_{\Mor}B$ and so we may assume that $b$ is $G$-stable and, as noted in the proof of Lemma~\ref{lem:graded_units}, that $e_B=e_b$. Therefore, by Green's Indecomposability Theorem, for every projective, indecomposable $b$-module $M$, the induced module $M\uparrow^G$ is a projective, indecomposable $B$-module. Furthermore, since every projective $B$-module $L$ is a summand of $e_B(L\downarrow_N\uparrow^G)=(e_b L\downarrow_N)\uparrow^G$, in fact every projective, indecomposable $B$-module is isomorphic to $M\uparrow^G$ for some projective, indecomposable $b$-module $M$.

Now fix some projective, indecomposable $b$-module $M$ and consider the composition factors of $M\uparrow^G$. Let $S$ be a simple $B$-module and $T$ a simple $b$-module appearing as a composition factor in $S\downarrow_N$. Certainly the multiplicity of $S$ among the composition factors of $M\uparrow^G$ is at most the multiplicity of $T$ among the composition factors of $(M\uparrow^G)\downarrow_N$. However, $(M\uparrow^G)\downarrow_N$ is the direct sum of $p^r$ projective, indecomposable $b$-modules, the $G$-conjugates of $M$. The claim follows.

\item For a $kG$-module $M$ we define the $k(G/Z)$-module ${}^ZM$ to be the fixed points of $M$ under left multiplication by $Z$. Note that ${}^Z(kG)\cong k(G/Z)$. Therefore, if $M$ is the projective cover of the simple $B$-module $S$, then ${}^ZM$ is the projective cover of the simple $\overline{B}$-module ${}^ZS=S$. In particular, ${\rm c}(\overline{B}) \leq {\rm c}(B)$.
\end{enumerate}
\end{proof}





We now prove Theorem~\ref{Donovan reduce:theorem}.
\newline
\newline
{\sc Proof of Theorem}~\ref{Donovan reduce:theorem}. Fix a finite $p$-group $P$.

Let $\cal{X}$ be the class of blocks $C$ of $\cO H$ for some finite group $H$ with defect group $Q$ isomorphic to a subgroup of $P$ and no normal subgroup $M \lhd H$ such that $H=C_Q(Q \cap M)M$. Suppose that there are only finitely many Morita equivalence classes amongst the members of $\cal{X}$. Then there is a largest Cartan invariant $c$ and a largest strong $\cO$-Frobenius number $s$ amongst blocks in $\cal{X}$. 

Let $G$ be a finite group and $B$ a block of $\cO G$ with defect group $D$ isomorphic to a subgroup of $P$. We claim that ${\rm c}(B)\leq |D|c$ and $\SOF(B)\leq s$. Suppose that ${\rm c}(B)>|D|c$ and that $|G|$ is minimal with respect to these conditions. By the definition of the constant $c$, $B$ is not in $\cal{X}$ and so there is a proper subgroup $N \lhd G$ with $G=C_D(D \cap N)N$. Let $b$ be a block of $\cO N$ covered by $B$. Note that by Lemma~\ref{lem:graded_units} $b$ has defect group $D\cap N$. Then by Lemma~\ref{Cartan_reduce:lemma}(i)
\begin{align*}
{\rm c}(b) \geq [D:D \cap N]^{-1}{\rm c}(B) > [D\cap N]c,
\end{align*}
contradicting the minimality of $G$. A similar argument using Theorem~\ref{sfreduce:theorem} shows the bound on strong $\cO$-Frobenius numbers. We have shown that the Cartan invariants and the strong $\cO$-Frobenius numbers of all blocks with defect group isomorphic to $P$ are bounded, and so the result follows by~\cite[Corollary 3.11]{eel19}.\hfill $\Box$



\section{Blocks of quasisimple groups with defect groups $Q_8 \times C_{2^n}$ and $Q_8 \times Q_8$}
\label{sec:classification_application}

Let $G$ be a block of a finite group $G$ with defect group $D$ and maximal $B$-subpair $(D,b_D)$. Recall that $B$ is \emph{controlled} if for all $B$-subpairs $(Q,b_Q) \leq (D,b_D)$ and $g \in G$ with $(Q,b_Q)^g \leq (D,b_D)$, there are $c \in C_G(Q)$ and $n \in N_G(D,b_D)$ such that $g=cn$. We will use~\cite[Theorem 4.8]{st06} to observe that every block with defect group $Q_8^m \times A$ for $m \geq 0$ and $A$ an abelian $2$-group is controlled, and apply the classification of controlled blocks of quasisimple groups given in~\cite{an20}. To do so we first review some notation.

Let $\mathcal{F}$ be a saturated fusion system on a $p$-group $D$. A subgroup $Q \leq D$ is weakly $\cF$-closed if for any $\phi \in \Hom_\cF(Q,D)$ we have $\phi(Q)=Q$, and $Q$ is strongly $\cF$-closed if for any $P \leq Q$ and any $\phi \in \Hom_\cF(P,D)$ we have $\phi(P) \leq Q$.

The normalizer $N_\cF(D)$ is the fusion subsystem of $\cF$ on $D$ such that for all $P,Q \leq D$, the morphisms $\Hom_{N_\cF(D)}(P,Q)$ are those $\phi \in \Hom_\cF(P,Q)$ such that there is $\bar{\phi} \in \Hom_\cF(D,D)$ extending $\phi$.

A $p$-group $D$ is called \emph{resistant} if $\mathcal{F}=N_{\mathcal{F}}(D)$ whenever $\mathcal{F}$ is a saturated fusion system on $D$.

\begin{proposition}
Let $D=Q_8^m \times A$, where $m \geq 0$ and $A$ is an abelian $2$-group. Then $D$ is resistant. 
\end{proposition}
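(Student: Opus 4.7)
The plan is to invoke the criterion of~\cite[Theorem 4.8]{st06} together with an analysis of the abelian subgroup structure of $D$. More precisely, by the Alperin fusion theorem for saturated fusion systems (on which Stancu's result ultimately rests), any saturated fusion system $\mathcal{F}$ on $D$ is generated by $N_{\mathcal{F}}(D)$ together with the automorphism groups of its $\mathcal{F}$-essential subgroups, so it suffices to rule out the existence of a proper $\mathcal{F}$-essential subgroup of $D$.

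The key structural observation I would use is that the only element of order $2$ in $Q_{8}$ is its central involution, while $A$ is itself central. Consequently any elementary abelian subgroup of $D = Q_{8}^{m} \times A$ is contained in $Z(Q_{8})^{m} \times \Omega_{1}(A) \leq Z(D)$, so the Thompson subgroup $J(D)$ (generated by the elementary abelian subgroups of maximal order) is central in $D$. This is precisely the Thompson-subgroup hypothesis required by Stancu's theorem, from which resistance of $D$ would follow immediately.

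The main obstacle I anticipate is matching the precise formulation of~\cite[Theorem 4.8]{st06} to the centrality of $J(D)$ derived above: if the criterion is phrased in terms of a ``strongly closed'' or ``characteristic'' subgroup rather than $J(D)$ itself, a brief verification that $J(D)$ (or a close variant such as $\Omega_{1}(Z(D))$) plays that role will complete the argument. Should this route prove inconvenient, I would fall back on a direct Alperin-style argument: for any hypothetical $\mathcal{F}$-essential subgroup $Q < D$, use that $A \leq Z(D) \leq C_{D}(Q) \leq Q$ and classify how $Q$ meets each $Q_{8}$ factor (each intersection being forced to be $Q_{8}$, a cyclic subgroup of order $4$, or $Z(Q_{8})$); then show that $\Out_{D}(Q)$ gives a non-trivial normal $2$-subgroup of $\Out_{\mathcal{F}}(Q)$, contradicting the existence of a strongly $2$-embedded subgroup in $\Out_{\mathcal{F}}(Q)$.
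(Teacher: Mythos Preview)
Your approach is essentially the same as the paper's. The paper quotes Stancu's criterion in the form ``$\mathcal{F}=N_{\mathcal{F}}(D)$ if and only if there is a central series $D=Q_n\geq\cdots\geq Q_1\geq 1$ with each $Q_i$ weakly $\mathcal{F}$-closed and $D$ strongly $\mathcal{F}$-closed,'' and then exhibits the series $D\geq \Omega_1(D)\geq 1$; your observation that every elementary abelian subgroup lies in $Z(Q_8)^m\times\Omega_1(A)\leq Z(D)$ is exactly what makes $\Omega_1(D)=J(D)$ the unique maximal elementary abelian subgroup (hence characteristic, hence weakly $\mathcal{F}$-closed) and makes this a central series, so the match with the cited formulation is immediate and your anticipated obstacle does not arise.
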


\begin{proof}
Let $\mathcal{F}$ be a saturated fusion system on $D$. By~\cite[Theorem 4.8]{st06} $\mathcal{F}=N_{\mathcal{F}}(D)$ if and only if there is a central series $D=Q_n \geq Q_{n-1} \geq \cdots \geq Q_1 \geq 1$ with each $Q_i$ weakly $\mathcal{F}$-closed and $D$ strongly $\cF$-closed.

Now let $Q_1=\Omega_1(D)$, the (unique) largest elementary abelian subgroup of $D$, and $Q_2=D$, so $Q_2 \geq Q_1 \geq 1$ forms a central series. Since $Q_1$ is the unique elementary abelian subgroup of $D$ of maximal rank, it must be weakly $\mathcal{F}$-closed. Also $D$ is automatically strongly $\cF$-closed, so the result follows.
\end{proof}

Now let $\mathcal{F}$ be the fusion system on $D$ afforded by $B$ and $(D,b_D)$, sometimes written $\mathcal{F}_{(D,b_D)}(G)$. By Alperin's fusion theorem $B$ is controlled if and only if $N_G(D,b_D)$ controls strong fusion in $D$ (see~\cite[Proposition 4.24]{ab79}). This is equivalent to $\mathcal{F}=N_\mathcal{F}(D)$. We conclude that every block with defect group $D \cong Q_8^m \times A$, where $m \geq 0$ and $A$ is an abelian $2$-group, is controlled.

\begin{proposition}
\label{none_in_quasisimples:prop}
Let $G$ be a finite quasisimple group. Let $m \geq 1$ and $A$ be a finite abelian $2$-group. If $m > 1$, then there is no block of $G$ with defect groups isomorphic to $(Q_8)^m \times A$. If $B$ is a block of $G$ with defect group $Q_8 \times A$ where $A$ is nontrivial, then $G$ is a quotient of a classical group of Lie type not of type $\mathbf{A}$ or ${}^2\mathbf{A}$, defined over a field of order $q$ a power of an odd prime and $B$ corresponds to a non-quasi-isolated block of the corresponding group of Lie type. Furthermore, if $m=1$ and $A \cong C_4$, then $G \cong Sp_{2r}(q)$ for some $r$.
\end{proposition}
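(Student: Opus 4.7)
The plan is to reduce the problem to the classification of controlled 2-blocks of quasisimple groups given in~\cite{an20}, and then proceed by inspection of the list. The entry point is the previous proposition together with the discussion just before the statement: any block of $G$ with defect group $D \cong (Q_8)^m \times A$ (with $A$ an abelian 2-group) has saturated fusion system equal to $N_\mathcal{F}(D)$, hence is controlled by Alperin's fusion theorem. In particular, such a block of $G$ must appear on the list in~\cite{an20}.

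First I would dispose of the ``non-Lie-type'' quasisimples. For $G$ sporadic (or a covering group), the 2-blocks and their defect groups are tabulated; a direct check shows no defect group isomorphic to $(Q_8)^m \times A$ with $m \geq 2$, and no defect group isomorphic to $Q_8 \times A$ with $A \neq 1$, arises. The same holds for covering groups of alternating and for groups of Lie type in defining characteristic 2, where defect groups of nontrivial-defect blocks are unipotent 2-subgroups and have structure incompatible with $(Q_8)^m \times A$. This leaves $G$ a covering group of a simple group of Lie type in odd characteristic.

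Next, among quasisimple groups of Lie type in odd characteristic, I would walk through the controlled-block entries of~\cite{an20} one family at a time. Exceptional types can be eliminated by examining the centralizers of semisimple 2-elements and Sylow 2-subgroups of their relevant Levi subgroups; in no case is a product $(Q_8)^m \times A$ with $m \geq 2$, or $Q_8 \times A$ with $A$ nontrivial, realized as a defect group. For type $\mathbf{A}$ and ${}^2\mathbf{A}$, the controlled blocks of $SL_n(q)$ and $SU_n(q)$ that have a $Q_8$ factor in the defect group arise from small cases ($n$ small or at very particular Harish-Chandra series) and the analysis shows that no additional abelian direct factor $A$ can appear. This excludes types $\mathbf{A}$ and ${}^2\mathbf{A}$ from the $m=1$, $A \neq 1$ conclusion, and also rules out $m \geq 2$. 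For the remaining classical types ($\mathbf{B}$, $\mathbf{C}$, $\mathbf{D}$, ${}^2\mathbf{D}$), the controlled blocks with defect group containing a $Q_8$-factor are precisely the non-quasi-isolated ones, whose defect groups (by Bonnafé--Rouquier together with the standard description via centralizers of semisimple 2-elements in Levi subgroups) decompose as products of 2-parts of smaller classical groups; quasi-isolated controlled blocks in these types can be checked to have no $(Q_8)^m \times A$ defect group with $m \geq 2$ or with $A \neq 1$ and $m = 1$.

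The last clause ($m=1$, $A \cong C_4$) forces a further restriction within the classical types of odd characteristic. Here the approach is to exploit that a $C_4$ factor in the defect group forces the corresponding semisimple element (and thus the Levi) to have a factor whose Sylow 2-subgroup is cyclic of order $4$; carefully matching this with the $Q_8$-factor (which must come from another classical factor of the Levi) and comparing against the possibilities in orthogonal versus symplectic cases leaves only $Sp_{2r}(q)$. The main obstacle will be this case analysis: controlling exactly which product decompositions of Levi subgroups of classical groups can produce the defect group $Q_8 \times C_4$ rather than $Q_8 \times C_{2^n}$ for other values of $n$, and ruling out orthogonal groups by a parity or spinor-norm argument on the $C_4$ factor.
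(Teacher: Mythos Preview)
Your approach is essentially the same as the paper's: both reduce to the fact that such blocks are controlled (via the preceding proposition and Alperin's fusion theorem) and then appeal to An's classification~\cite{an20}. The paper's proof is extremely terse---it simply says the result ``follows directly from~\cite[Theorem 1.1]{an20} and its proof''---whereas you have sketched out the case analysis that one would carry out when reading through~\cite{an20}; but this is the same argument at a different level of detail, not a different route.
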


\begin{proof}
By the discussion above, all blocks with these defect groups are controlled. The result follows directly from~\cite[Theorem 1.1]{an20} and its proof.
\end{proof}

When we come to consider blocks of arbitrary groups with defect group $Q_8 \times Q_8$, we must deal with the case of blocks $b$ of $Sp_{2r}(q)$ with defect group $Q_8 \times C_4$. We show that there can be no overgroup of $Sp_{2r}(q)$ in $\Aut(Sp_{2r}(q))$ possessing a block covering $b$ with defect group $Q_8 \times Q_8$.

In what follows, for $r\in\mathbb{N}$ and $q$ a power of a prime, we set $I_r \in GL_r(q)$ to be the $r\times r$ identity matrix,
\begin{align*}
J_r:=
\begin{pmatrix}
0 & 0 & \cdots & 0 & 1 \\
0 & 0 & \cdots & 1 & 0 \\
\vdots & \vdots & \ddots & \vdots & \vdots \\
0 & 1 & \cdots & 0 & 0 \\
1 & 0 & \cdots & 0 & 0
\end{pmatrix}\in GL_r(q)
\end{align*}
and
\begin{align*}
\Omega_{2r}:=
\begin{pmatrix}
0 & J_r \\
-J_r & 0
\end{pmatrix}\in GL_{2r}(q).
\end{align*}
We define
\begin{align*}
Sp_{2r}(q)&:=\{x\in GL_{2r}(q)|x\Omega_{2r}x^T=\Omega_{2r}\},\\
CSp_{2r}(q)&:=\{x\in GL_{2r}(q)|x\Omega_{2r}x^T=\lambda \Omega_{2r} \text{ for some }\lambda\in\mathbb{F}_q^\times\}.
\end{align*}

\begin{lemma}
\label{noQ8xQ8ext:Lemma}
Let $H:=Sp_{2r}(q)$, where $4\mid(q-1)$ but $8\nmid(q-1)$ and $b$ a block of $\cO H$ with defect group $P \cong Q_8\times C_4$ labelled by some $s\in H^*=SO_{2r+1}(q)$ such that $m_{X-1}(s)=3$ and other $m_\Gamma(s)\leq 1$. Furthermore, let $H\lhd G$ such that $C_G(H)\leq H$ and $B$ a block of $\cO G$ lying above $b$ with defect group $D$. Then $D\ncong Q_8\times Q_8$.
\end{lemma}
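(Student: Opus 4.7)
\emph{Proof plan.} I argue by contradiction, assuming $D\cong Q_8\times Q_8$. By standard block covering theory a defect group of $b$ is $G$-conjugate to $D\cap H$, so after $G$-conjugation we may take $D\cap H=P$, giving $[D:P]=2$ and an element $g\in D\setminus P$ of $2$-power order with $g^2\in P$. The key first observation is that, writing $q=p^f$, the hypothesis $q\equiv 5\pmod 8$ forces $f$ odd (since $p^2\equiv 1\pmod 8$ for every odd prime $p$) and hence $p\equiv q\equiv 5\pmod 8$; consequently the $2$-part of $\Out(Sp_{2r}(q))$ is exactly the $C_2$ coming from the diagonal automorphism induced by $CSp_{2r}(q)$, since the field-automorphism group has odd order and type $C_r$ has no graph automorphisms.

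Since $C_G(H)\leq H$, the element $g$ induces a non-inner automorphism of $H$ of $2$-power order, representing the unique nontrivial class in the $2$-part of $\Out(H)$, so $c_g$ agrees on $H$ with $c_{\gamma h}$ for some $\gamma\in CSp_{2r}(q)\setminus Sp_{2r}(q)\cdot Z(CSp_{2r}(q))$ and $h\in H$. I next analyse the required action of $g$ on $P$: writing $P=Q_8\times\langle c\rangle\leq Q_8\times Q_8=D$ with $c$ of order $4$, and adjusting $g$ by an element of $gP$, I may assume $g=(z,g_2)$ with $z\in Z(Q_8)$ and $g_2\in Q_8\setminus\langle c\rangle$; equivalently $c_g$ centralizes the first $Q_8$ factor of $P$ pointwise. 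Crucially, from the Jordan decomposition label $s$ with $m_{X-1}(s)=3$ and other $m_\Gamma(s)\leq 1$, this first $Q_8$ factor is the Sylow $2$-subgroup of an $SL_2(q)$-subgroup of $Sp_{2r}(q)$ acting irreducibly on a $2$-dimensional non-degenerate symplectic subspace $V_1$ of the natural module $V=V_1\perp V_2$.

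The contradiction now follows by Schur's lemma: $\gamma h$ centralizes the irreducible $Q_8$-action on $V_1$, so restricts to a scalar $\lambda I_{V_1}$ there and in block form $\gamma h=(\lambda I_{V_1},M)$ with $M\in CSp(V_2)$ satisfying $\mu(\gamma h)=\lambda^2=\mu(M)$. Hence $\mu(\gamma h)$ is a square in $\mathbb{F}_q^\times$, so $\gamma h\in Sp_{2r}(q)\cdot Z(CSp_{2r}(q))$, and therefore $c_{\gamma h}=c_g$ is inner on $H$; together with $C_G(H)\leq H$ this forces $g\in H$, contradicting $g\in D\setminus P\subseteq G\setminus H$. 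The main obstacle is the explicit identification of the $Q_8$ factor of $P$ as the Sylow $2$-subgroup of an $SL_2(q)$-Levi acting irreducibly on a symplectic plane, which is what makes the Schur's lemma step applicable; this identification is supplied precisely by the Jordan decomposition constraint $m_{X-1}(s)=3$.
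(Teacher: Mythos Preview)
Your proof is correct and follows essentially the same strategy as the paper's own proof: both establish that the $2$-part of $\Out(Sp_{2r}(q))$ is the single diagonal $C_2$ (via $f$ odd), both deduce that some $g\in D\setminus P$ must centralize the $Q_8$ factor $P_1\leq Sp_2(q)$, and both then show that no element of $CSp_{2r}(q)$ inducing the nontrivial outer class can centralize $P_1$.

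The only difference is in presentation of the final contradiction. The paper works locally on $V_1$: it reduces to showing that no $g\in Sp_2(q)$ satisfies $g_1g^{-1}\in C_{CSp_2(q)}(P_1)=Z(CSp_2(q))$, which holds because $g_1=\mathrm{diag}(i,1)$ induces a non-inner automorphism of $Sp_2(q)$. You instead use Schur's lemma to see that $\gamma h$ restricts to a scalar $\lambda$ on $V_1$, forcing the similitude multiplier $\mu(\gamma h)=\lambda^2$ to be a square, hence $\gamma h\in Sp_{2r}(q)\cdot Z(CSp_{2r}(q))$. These are equivalent formulations of the same observation. Your version has the minor advantage of making explicit why there \emph{must} exist a $P_1$-centralizing element in $D\setminus P$ (via the decomposition $D=P_1\times Q_2$), a point the paper leaves to the reader; the paper's version avoids checking that the $Q_8$-action on $V_1$ is absolutely irreducible over $\mathbb{F}_q$, which your Schur step needs but which is immediate from the explicit generators.
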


\begin{proof}
We first describe $P\leq H$ (see~\cite{fs89} for a description of defect groups of finite classical groups). We denote by $i\in\mathbb{F}_q^\times$ a primitive $4^{\nth}$ root of unity. Then $P=P_1\times P_2\leq Sp_2(q)\times Sp_{2(r-1)}(q)\leq Sp_{2r}(q)$, where $P_1=\Syl_2(Sp_2(q))\cong Q_8$ is generated by
\begin{align*}
\begin{pmatrix}
i & 0 \\
0 & -i
\end{pmatrix},
\begin{pmatrix}
0 & 1 \\
-1 & 0
\end{pmatrix}
\end{align*}
and $P_2\cong C_4$.

Since $C_G(H)\leq H$, $G/H\hookrightarrow\Out(H)$ and as $8\nmid(q-1)$, $q$ is not a square and so $H$ has no field automorphisms of order $2$. It follows that $\Out(H)$ has a normal Sylow $2$-subgroup of order $2$ generated by the diagonal automorphism induced by
\begin{align*}
\begin{pmatrix}
iI_r & 0 \\
0 & I_r
\end{pmatrix}.
\end{align*}
Furthermore, this automorphism restricted to $Sp_2(q)\times Sp_{2(r-1)}(q)$ is induced by
\begin{align*}
(g_1,g_2):=\left(\begin{pmatrix}
i & 0 \\
0 & 1
\end{pmatrix},
\begin{pmatrix}
iI_{r-1} & 0 \\
0 & I_{r-1}
\end{pmatrix}\right)\in GL_2(q)\times GL_{2(r-1)}(q).
\end{align*}
Suppose $D \cong Q_8\times Q_8$. Then we may choose $D$ such that the unique block of $\cO HD$ covering $b$ has defect group $D$. Moreover, the image of $D$ in $\Out(H)$ is generated by the non-trivial diagonal automorphism.

We will have reached our desired contradiction once we have proved that no $h\in D\backslash P$ commutes with $P_1$. Since
\begin{align*}
C_{Sp_{2r}(q)}(Z(P_1))=C_{Sp_{2r}(q)}\left(\begin{pmatrix}
-1 & 0 \\
0 & -1
\end{pmatrix},
\begin{pmatrix}
I_{r-1} & 0 \\
0 & I_{r-1}
\end{pmatrix}\right)=Sp_2(q)\times Sp_{2(r-1)}(q),
\end{align*}
it is enough to show there exists no $g\in Sp_2(q)$ such that $g_1g^{-1}\in C_{CSp_2(q)}(P_1)$. This follows from the fact that $C_{CSp_2(q)}(P_1)=Z(CSp_2(q))$ and that conjugation by $g_1$ is not an inner automorphism of $Sp_2(q)$. These two facts can be readily checked.
\end{proof}


\section{Blocks with defect group $Q_8 \times C_{2^n}$ or $Q_8 \times Q_8$}
\label{sec:background}

We begin by gathering together some information on subgroups and automorphism groups of these $2$-groups, easily verified by the reader. Blocks with defect group $Q_8 \times C_{2^n}$ were studied in~\cite{sa13}, where many of their numerical invariants were computed.

\begin{lemma}
\label{Dsubs:Lem}
\begin{enumerate}[(a)]
\item Let $P \cong Q_8 \times C_{2^n}$, where $n \geq 1$. Let $Q \leq P$ with $[P:Q]=2$.

\begin{enumerate}[(i)]
\item If $n=1$, then $Q \cong Q_8$ or $C_4 \times C_2$.

\item If $n \geq 2$, then $Q\cong C_4 \times C_{2^n}$, $Q_8 \times C_{2^{n-1}}$ or $C_4 \rtimes C_{2^n}$.
\end{enumerate}

In particular, every subgroup of $P$ has the form $1$, $C_{2^m}$, $C_2 \times C_{2^m}$, $C_4 \times C_{2^m}$, $Q_8$, $Q_8 \times C_{2^m}$ or $C_4 \rtimes C_{2^m}$ for some $m$, with $m\geq 2$ in the final case.

\item The proper subgroups of $Q_8 \times Q_8$ are isomorphic to the following: $1$, $C_2$, $C_4$, $C_2 \times C_2$, $C_4 \times C_2$, $C_4 \times C_4$, $Q_8$, $Q_8 \times C_2$, $Q_8 \times C_4$, $C_4 \rtimes C_4$ and $C_4 \rtimes Q_8$.
\end{enumerate} 
\end{lemma}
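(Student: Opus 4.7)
The plan is to compute the Frattini quotient in each part, enumerate the maximal subgroups, and then induct on $|Q|$.

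For (a), $\Phi(P) = \langle -1\rangle \times \langle z^2\rangle$, so $P/\Phi(P) \cong (C_2)^3$ and $P$ has seven maximal subgroups. I would split them according to whether they contain $\langle z\rangle$. Three do: these are preimages of the three index-$2$ cyclic subgroups of $Q_8$, hence $\cong C_4 \times C_{2^n}$. The other four contain $\langle z^2\rangle$ but not $\langle z\rangle$, so modulo $\langle z^2\rangle$ they become complements to $\langle \bar z\rangle$ in $P/\langle z^2\rangle \cong Q_8 \times C_2$. For $n=1$ these complements are the four graphs of homomorphisms $Q_8 \to C_2$, each $\cong Q_8$. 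For $n \geq 2$ one preimage is $\langle i, j, z^2\rangle \cong Q_8 \times C_{2^{n-1}}$; the three other preimages, exemplified by $\langle iz, j, z^2\rangle$, can be identified as $C_4 \rtimes C_{2^n}$ by computing $(iz)\,j\,(iz)^{-1} = j^{-1}$, observing $\langle iz\rangle \cap \langle j\rangle = 1$, and matching orders.

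For the classification of all subgroups I would induct on $|Q|$: any proper $Q$ lies in some maximal $M$ of one of the three types above. Subgroups of $C_4 \times C_{2^n}$ (abelian of rank $\leq 2$) are of the form $C_{2^a} \times C_{2^b}$ with $a \leq 2$, all on the list. Subgroups of $Q_8 \times C_{2^{n-1}}$ are on the list by the induction hypothesis on $n$. For $G = C_4 \rtimes C_{2^m} = \langle a, b \mid a^4 = b^{2^m} = 1,\ bab^{-1}=a^{-1}\rangle$ with $m \geq 2$, the same Frattini argument gives $\Phi(G) = \langle a^2, b^2\rangle$ and three maximal subgroups $\langle a^2, b\rangle$, $\langle a, b^2\rangle$, $\langle a^2, ab\rangle$, all abelian (since $a^2, b^2 \in Z(G)$ and $(ab)^2 = b^2$); hence proper subgroups of $G$ are abelian of the listed types.

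Part (b) proceeds along the same lines for $P = Q_8 \times Q_8$: $\Phi(P) \cong C_2 \times C_2$ gives $P/\Phi(P) \cong (C_2)^4$ and $15$ maximal subgroups. Via Goursat these split into $6$ direct products of type $Q_8 \times C_4$ (a $Q_8$ factor paired with a cyclic maximal subgroup of the other) and $9$ diagonal fibre products $\{(a,b) \in Q_8 \times Q_8 : a\langle x\rangle = \phi(b\langle y\rangle)\}$ indexed by pairs $(x, y) \in \{i, j, k\}^2$ and the unique isomorphism $\phi\colon Q_8/\langle x\rangle \to Q_8/\langle y\rangle$. Each diagonal subgroup contains the normal $C_4 = \langle x\rangle \times \{1\}$ and a split $Q_8$-complement (the graph of an automorphism of $Q_8$ sending $\langle x\rangle$ to $\langle y\rangle$); the $Q_8$-action on this $C_4$ factors through $Q_8/\langle x\rangle \to \Aut(C_4)$ as inversion, so the subgroup is $\cong C_4 \rtimes Q_8$. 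Induction on $|Q|$ then reduces to classifying subgroups of $Q_8 \times C_4$ (handled by part (a) with $n = 2$) and of $C_4 \rtimes Q_8$; a final Frattini calculation in the latter identifies its $7$ maximal subgroups as copies of $Q_8 \times C_2$, $C_4 \times C_4$, and $C_4 \rtimes C_4$, whose subgroups are all on the list by (a) and by the abelian case.

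The main technical point in both parts is to verify that each twisted or diagonal subgroup really is $C_4 \rtimes C_{2^m}$ (respectively $C_4 \rtimes Q_8$): one must check that the chosen generators satisfy precisely the defining relations of these semidirect products, with no hidden collapse of the presentation. In each instance this is settled by explicit computation of the conjugation action on the distinguished normal cyclic subgroup together with a comparison of orders.
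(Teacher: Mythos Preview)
The paper gives no proof of this lemma at all: it simply states that the subgroup and automorphism information is ``easily verified by the reader.'' Your proposal therefore supplies what the paper omits, and it does so correctly. The Frattini-quotient enumeration of maximal subgroups followed by induction on $|Q|$ is the natural approach, and your identifications of the twisted maximal subgroups as $C_4 \rtimes C_{2^n}$ and $C_4 \rtimes Q_8$ via explicit generators and relations are sound. The only minor point is that in part (a) you should record the base case $n=0$ (or $n=1$) of the induction explicitly, since the inductive step for $Q_8 \times C_{2^{n-1}}$ appeals to it; but this is immediate from the well-known subgroup structure of $Q_8$ itself.
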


\begin{lemma}
\label{Dautos:Lem}
\begin{enumerate}[(i)]
\item $\Aut(C_{2^n} \times C_{2^m})$ is a $2$-group if $m \neq n$, and is a $\{2,3\}$-group with Sylow $3$-subgroup of order $3$ if $m=n$.

\item $\Aut(Q_8 \times C_{2^n})$ is a $\{2,3\}$-group with Sylow $3$-subgroup of order $3$.

\item $\Aut(Q_8 \times Q_8)$ is a $\{2,3\}$-group with Sylow $3$-subgroup $C_3 \times C_3$.

\item $\Aut(C_4 \rtimes C_{2^n})$ is a $2$-group.

\item $\Aut(C_4 \rtimes Q_8)$ is a $2$-group.
\end{enumerate}
\end{lemma}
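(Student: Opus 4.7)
The plan is to handle each part by analyzing the canonical map $\Aut(P) \to \Aut(P/\Phi(P))$, whose kernel is a $2$-group, and controlling the image via characteristic subgroups of $P$ and canonical structures on $P/\Phi(P)$. The key recurring tool is: when $P^2 \subseteq Z(P)$, squaring descends to a quadratic map $\tilde q : P/Z(P) \to Z(P)$ whose fibers are $\Aut(P)$-invariant and whose sizes often single out characteristic elements of $Z(P)$.

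For (i), when $m = n$ the natural action of $\GL_2(\ZZ/2^n)$ surjects onto $\GL_2(\mathbb{F}_2) \cong S_3$, giving Sylow $3$-subgroup of order $3$; when $m \ne n$ (say $m < n$), the characteristic cyclic subgroup $A^{2^m}$ of order $2^{n-m}$ has its unique subgroup of order $2$ inside $\Omega_1(A) \cong \mathbb{F}_2^2$, providing a characteristic line and forcing the image of $\Aut(A) \to \GL_2(\mathbb{F}_2)$ into a vector stabilizer of order $2$. For (ii), $Z(D)/\Phi(D) \cong C_2$ is a characteristic line in $D/\Phi(D) \cong \mathbb{F}_2^3$, so the image of $\Aut(D) \to \GL_3(\mathbb{F}_2)$ lies in the line-stabilizer (order $24$, Sylow $3$ of order $3$), attained by the outer automorphism of $Q_8$ cycling $i, j, k$.

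For (iii), $\Phi(D) = Z(D) \cong \mathbb{F}_2^2$ and $\tilde q : D/Z(D) \to Z(D)$ has fiber sizes $(1,3,3,9)$, so the element of $Z(D)$ with fiber of size $9$ is characteristic and $\Aut(D) \to \Aut(Z(D)) \cong S_3$ has image of order at most $2$. The two $3$-fibers are the nonzero parts of the subspaces $H/Z(H), K/Z(K) \subset D/Z(D)$, which are either preserved individually or swapped, yielding an image in $\GL_4(\mathbb{F}_2)$ of order $|\GL_2(\mathbb{F}_2)|^2 \cdot 2 = 72$ with Sylow $3$-subgroup $C_3 \times C_3$, matching the lower bound $\Aut(Q_8) \times \Aut(Q_8) \hookrightarrow \Aut(D)$.

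For (iv), $D' \cong C_2$ is characteristic and $D/D' \cong C_2 \times C_{2^n}$; when $n \ne 1$, part (i) makes $\Aut(D/D')$ a $2$-group, and the kernel $\Hom(D/D', D')$ of $\Aut(D) \to \Aut(D/D')$ is also a $2$-group, while the cases $n = 0, 1$ (where $D \cong C_4$ or $D_8$) give $\Aut(D) \cong C_2$ or $D_8$ respectively by inspection. For (v), $\Phi(D) = Z(D) = D' \cong \mathbb{F}_2^2$ leaves no characteristic proper subgroup of $Z(D)$ from group-theoretic structure alone; the main obstacle is to distinguish $\langle -1 \rangle \subset Z(D)$, which we achieve by computing the squaring map $D \to Z(D)$ to find fiber sizes $(4,4,20,4)$, so $\langle -1 \rangle$ is characteristic as the subgroup generated by the element with uniquely largest fiber. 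Any odd-order $\phi \in \Aut(D)$ then acts trivially on $Z(D)$ and preserves the canonical $\tilde q : D/Z(D) \to Z(D)$, which has fiber sizes $(1,1,1,5)$; the three singleton fibers yield three distinguished elements in $D/Z(D) \cong \mathbb{F}_2^3$, forcing $\phi$ to induce an element of a $4$-element $2$-group in $\GL_3(\mathbb{F}_2)$, so $\phi$ has $2$-power order and $\Aut(D)$ is a $2$-group.
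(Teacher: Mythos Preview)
Your proof is correct. The paper itself does not prove this lemma at all: it is introduced with the sentence ``We begin by gathering together some information on subgroups and automorphism groups of these $2$-groups, easily verified by the reader,'' and no argument is supplied. So there is no approach in the paper to compare against; you have filled in what the authors leave as an exercise.

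A few minor comments on exposition. In (i) your plan speaks of $\Aut(P)\to\Aut(P/\Phi(P))$, but the argument you actually give for $m\neq n$ uses a characteristic line in $\Omega_1(A)$ rather than in $A/\Phi(A)$; this is fine (the kernel of $\Aut(A)\to\Aut(\Omega_1(A))$ is also a $2$-group for abelian $2$-groups $A$, since any $\phi$ fixing $\Omega_1(A)$ acts trivially on each layer $\Omega_{k+1}(A)/\Omega_k(A)$ and is therefore unipotent), but you should say so explicitly. In (ii) your computation assumes $n\geq 1$; the case $n=0$ is just $\Aut(Q_8)\cong S_4$, which you might mention. In (v), one of your ``three distinguished elements'' in $D/Z(D)$ is the identity coset, so what really does the work is that the two nonzero singleton fibers span a $2$-dimensional subspace fixed pointwise by any odd-order automorphism; the conclusion then follows as you say. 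None of these affect the validity of the argument.
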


The key to our treatment of blocks with defect group $Q_8 \times C_{2^n}$ or $Q_8 \times Q_8$ is that in most cases covered blocks of normal subgroups of index $2$ are nilpotent. This is covered in the following lemma.

\begin{lemma}
\label{cyclicnilpotent}
Let $B$ be a $2$-block of a finite group $G$ with defect groups isomorphic to (i) $C_{2^m} \times C_{2^n}$ for $m \neq n$, (ii) $C_4 \rtimes C_{2^n}$ for $n \geq 2$, or (iii) $C_4 \rtimes Q_8$. Then $B$ is nilpotent.
\end{lemma}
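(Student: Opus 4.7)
The strategy is to show that the fusion system $\mathcal{F}$ of $B$ on $D$ equals the inner fusion system $\mathcal{F}_D(D)$, which is equivalent to $B$ being nilpotent (this is the characterization of nilpotent blocks in terms of their fusion systems). The plan is to reduce this to verifying two properties: $\Aut_\mathcal{F}(D) = \Inn(D)$, and $D$ is resistant, i.e., $\mathcal{F} = N_\mathcal{F}(D)$.

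For the first property, I would use that by Lemma~\ref{Dautos:Lem}, $\Aut(D)$ is a $2$-group in all three cases: part (i) with $m\neq n$ in case (i), part (iv) in case (ii), and part (v) in case (iii). The inertial quotient $\Out_\mathcal{F}(D) = N_G(D,b_D)/D\,C_G(D)$ is always a $2'$-group by general block theory, and it embeds in the $2$-group $\Out(D)$; hence $\Out_\mathcal{F}(D) = 1$, so $\Aut_\mathcal{F}(D) = \Inn(D)$.

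For the second property, I would apply~\cite[Theorem 4.8]{st06} exactly as in the earlier proposition on the resistance of $Q_8^m\times A$: it suffices to exhibit in each case a central series of $D$ consisting of characteristic subgroups, since these are automatically weakly $\mathcal{F}$-closed in any saturated fusion system on $D$ and $D$ is trivially strongly $\mathcal{F}$-closed. In case (i), $D$ is abelian and the series $1 \leq D$ works. In cases (ii) and (iii), a direct computation from the presentations shows that $[D,D] \leq Z(D)$ (i.e., $D$ has nilpotency class $2$), so the series $1 \leq Z(D) \leq D$ does the job: for $D = C_4\rtimes C_{2^n}$ with $\langle a\rangle$ the normal $C_4$, one has $[D,D] = \langle a^2\rangle \leq Z(D)$; for $D = C_4\rtimes Q_8$, the $Q_8$-action on $C_4$ necessarily factors through a quotient $Q_8\to C_2$, so $[D,D]$ is generated by $\langle a^2\rangle$ together with $[Q_8,Q_8] = Z(Q_8)$, both of which are central in $D$.

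Combining the two properties, $\mathcal{F} = N_\mathcal{F}(D)$ means every morphism in $\mathcal{F}$ extends to an automorphism of $D$, and since $\Aut_\mathcal{F}(D) = \Inn(D)$ these extensions are all inner; thus every morphism in $\mathcal{F}$ is a restriction of conjugation by an element of $D$, giving $\mathcal{F} = \mathcal{F}_D(D)$, so $B$ is nilpotent. The main technical point to verify is the nilpotency-class bound in cases (ii) and (iii), which amounts to short bookkeeping once the presentations have been fixed; no case-by-case analysis of essential subgroups of $D$ is needed, since resistance bypasses Alperin's fusion theorem entirely.
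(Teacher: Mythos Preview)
Your proposal contains a genuine gap: the assertion that characteristic subgroups of $D$ are automatically weakly $\mathcal{F}$-closed in every saturated fusion system on $D$ is false. A standard counterexample is $Z(D_8)$ inside the fusion system $\mathcal{F}_{S_4}(D_8)$: the central involution of $D_8$ is characteristic, yet it is $\mathcal{F}$-conjugate to the other double transpositions, so $Z(D_8)$ is not weakly $\mathcal{F}$-closed. Being characteristic only controls images under automorphisms of $D$, whereas weak $\mathcal{F}$-closure concerns arbitrary $\mathcal{F}$-morphisms $Q\to D$, which need not extend to $D$. Thus Stancu's criterion cannot be invoked on the series $1\leq Z(D)\leq D$ without further argument.

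Your approach is salvageable, but not for the reason you give. In both cases (ii) and (iii) one can check directly that every involution of $D$ lies in $Z(D)$; hence $\Omega_1(D)$ is the \emph{unique} maximal elementary abelian subgroup of $D$ (so weakly $\mathcal{F}$-closed for the same reason as in the paper's proposition on $Q_8^m\times A$), and one verifies $[D,D]\leq\Omega_1(D)\leq Z(D)$, giving the required central series $1\leq\Omega_1(D)\leq D$. This, together with your correct observation that $\Aut(D)$ is a $2$-group, would complete the argument. The paper takes a shorter route: for (i) it uses only that $\Aut(D)$ is a $2$-group (abelian defect groups are automatically resistant), while for (ii) and (iii) it simply quotes existing classifications of saturated fusion systems, namely~\cite[Theorem 3.7]{cg12} for metacyclic $2$-groups and~\cite[Table 13.1]{sa14} for $C_4\rtimes Q_8$, each of which records that the trivial fusion system is the only one.
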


\begin{proof}
Let $D$ be a defect group for $B$.

(i) Since $D$ is abelian, it suffices to observe that $\Aut(D)$ is a $2$-group.

(ii) Since $C_4 \rtimes C_{2^n}$ is metacyclic, by~\cite[Theorem 3.7]{cg12} there is only one saturated fusion system on this $2$-group, and so $B$ must be nilpotent.

(iii) There is only one saturated fusion system on $C_4 \rtimes Q_8$ by~\cite[Table 13.1]{sa14}.
\end{proof}

We summarize the results of~\cite{sa13} that we need here:

\begin{proposition}[\cite{sa13}]
\label{numbersimples}
Let $B$ be a block with defect group $Q_8 \times C_{2^n}$ for some $n$. Then one of the following occurs:

\begin{enumerate}[(i)]
\item $k(B)=2^n\cdot 7$ and $l(B)=3$;
\item $B$ is nilpotent, $k(B)=2^n \cdot 5$ and $l(B)=1$.
\end{enumerate}
\end{proposition}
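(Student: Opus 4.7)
My approach splits according to the inertial quotient $E := N_\cF(D)/DC_\cF(D)$ of the fusion system $\cF$ of $B$ on $D = Q_8 \times C_{2^n}$. Since $E$ embeds as a $2'$-subgroup of $\Out(D)$, Lemma~\ref{Dautos:Lem}(ii) forces $E \in \{1, C_3\}$. Because $D$ is resistant (by the Proposition preceding~\ref{none_in_quasisimples:prop}), $\cF = N_\cF(D)$ is determined by $E$ together with its action on $D$. In the $C_3$-case, $\Aut(C_{2^n})$ being a $2$-group forces $E$ to centralize the cyclic factor, and on $Q_8$ it must permute the three cyclic subgroups of order $4$ transitively; thus $\cF$ is unique up to isomorphism and coincides with the fusion system of the principal $2$-block of $\SL_2(3) \times C_{2^n}$.

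If $E = 1$ then $B$ is nilpotent, so by Puig's theorem $B \sim_{\Mor} \cO D$. Hence $l(B) = 1$ and $k(B) = k(D) = k(Q_8) \cdot k(C_{2^n}) = 5 \cdot 2^n$, giving case (ii).

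If $E = C_3$, I would apply Brauer's subsection formula $k(B) = \sum_{(u, b_u)} l(b_u)$, summing over $G$-conjugacy classes of $B$-subsections. The $E$-orbits on $D$ split into $2^{n+1}$ singletons from $Z(D) = \langle z \rangle \times C_{2^n}$ (where $z$ is the unique involution of $Q_8$), together with $2^n$ further orbits, one per $c \in C_{2^n}$, fusing the $D$-classes of $(\pm i, c)$, $(\pm j, c)$ and $(\pm k, c)$. For $u$ in a non-central orbit, $C_D(u) \cong C_4 \times C_{2^n}$ is abelian and the $E$-stabilizer of $u$ is trivial, so $b_u$ is nilpotent and $l(b_u) = 1$; for $u \in Z(D)$ the centralizer fusion system $C_\cF(u)$ equals $\cF$, and a source-algebra comparison (using that $\langle u \rangle$ is central in $D$ and fixed by $E$) shows $l(b_u) = l(B)$. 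Hence
\[
k(B) = 2^{n+1} l(B) + 2^n.
\]

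The main obstacle is pinning down $l(B) = 3$ intrinsically, since it must be shown to hold for every block with this fusion system, not just the principal block of $\SL_2(3) \times C_{2^n}$. My preferred route is Alperin's weight conjecture for this controlled fusion system, which identifies $l(B)$ with the number of simple modules of the twisted group algebra of $E = C_3$ over $k$; since $C_3$ has trivial Schur multiplier in characteristic $2$, this gives $l(B) = k(C_3) = 3$. Substitution then yields $k(B) = 7 \cdot 2^n$, confirming (i). Alperin's conjecture in this setting follows either by direct verification on the principal block of $\SL_2(3) \times C_{2^n}$ together with perfect-isometry transport through Usami--Puig theory, or one can appeal directly to Sambale's explicit character-theoretic computation in~\cite{sa13}.
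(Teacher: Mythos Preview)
The paper's own proof is a one-line citation to \cite[Lemma~2.2, Theorem~2.7]{sa13}; there is no independent argument in this paper. Your sketch is therefore an attempt to reconstruct Sambale's computation rather than a variant of anything done here.

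The outline is broadly correct---the dichotomy $E\in\{1,C_3\}$, the nilpotent case, and the subsection count are all fine---but there are two genuine gaps. First, the claim that ``a source-algebra comparison shows $l(b_u)=l(B)$'' for $u\in Z(D)$ is unjustified: equality of fusion systems, even together with triviality of the K\"ulshammer--Puig class, does not by itself force equal numbers of simple modules---that implication is precisely an instance of Alperin's weight conjecture. The standard manoeuvre is instead, for each nonidentity $u\in Z(D)$, to pass from $b_u$ to the dominated block of $C_G(u)/\langle u\rangle$, which has strictly smaller defect group (isomorphic to $Q_8\times C_{2^{n-1}}$, $C_2^2\times C_{2^n}$, etc.), and obtain $l(b_u)=3$ by induction together with known results in the abelian-defect cases; this fixes $k(B)-l(B)$ but still leaves $l(B)$ itself undetermined. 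Second, neither of your proposed routes to $l(B)=3$ is available as stated: Alperin's weight conjecture is not a theorem one may invoke for these blocks, and Usami--Puig theory is developed for \emph{abelian} defect groups and does not cover $Q_8\times C_{2^n}$. Sambale's actual determination of $l(B)$ proceeds via lower defect groups and Cartan/contribution-matrix constraints rather than either of these devices. Your final fallback of appealing directly to \cite{sa13} is thus exactly what the paper does, so the scaffolding preceding it is expository rather than an independent proof.
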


\begin{proof}
This follows from~\cite[Lemma 2.2, Theorem 2.7]{sa13}.
\end{proof}

The next result will be used frequently without reference throughout the remainder of the article.

\begin{proposition}\label{nilpotent_index_p:prop}
Let $G$ be a finite group and $N \lhd G$ with $G/N$ a $p$-group. Let $b$ be a block of $\cO N$ and $B$ the unique block of $\cO G$ covering $b$. Then $B$ is nilpotent if and only if $b$ is.
\end{proposition}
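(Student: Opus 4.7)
The plan is to exploit that, since $B$ is the unique block of $\cO G$ covering $b$, the block $b$ is $G$-stable and $e_B = e_b$ (cf.~\cite[Theorem 5.5.10]{nats88}). Choose a defect group $D$ of $B$ such that $E := D \cap N$ is a defect group of $b$; since $D/E$ embeds into $G/N$ as a Sylow $p$-subgroup and $G/N$ is itself a $p$-group, this forces $G = DN$ and $D/E \cong G/N$. Recall that nilpotency of $B$ (resp.\ $b$) is equivalent to $N_G(Q,b_Q)/QC_G(Q)$ being a $p$-group for every $B$-subpair $(Q,b_Q)$ (resp.\ $N_N(Q,c_Q)/QC_N(Q)$ for every $b$-subpair). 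I would then prove each implication separately.

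For the forward direction, fix compatible maximal Brauer pairs $(D,b_D)$ and $(E,c_E)$ with $b_D$ covering $c_E$. For each $Q \leq E$, the unique $B$-subpair $(Q,b_Q) \leq (D,b_D)$ covers the unique $b$-subpair $(Q,c_Q) \leq (E,c_E)$. I would show that the natural map
\[
N_N(Q,c_Q)/QC_N(Q) \longrightarrow N_G(Q,b_Q)/QC_G(Q)
\]
is injective. The inclusion of numerators comes from uniqueness of the chain of Brauer pairs below $(D,b_D)$; injectivity on the quotients uses $QC_G(Q) \cap N = QC_N(Q)$, which holds because $Q \leq N$. Since $B$ nilpotent makes the target a $p$-group, the source is too, giving $b$ nilpotent.

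For the converse I would appeal to K\"ulshammer and Puig's theorem on extensions of $G$-stable nilpotent blocks: in this setting the source algebra of $B$ is isomorphic to a twisted group algebra $\cO_{\gamma}\widetilde{L}$, where $E \lhd \widetilde{L}$ with $\widetilde{L}/E$ a subquotient of $G/N$ and $\gamma \in H^2(\widetilde{L}/E, k^{\times})$. Because $G/N$ is a $p$-group, so is $\widetilde{L}/E$; since $k^{\times}$ has no $p$-torsion in characteristic $p$, $H^2(\widetilde{L}/E, k^{\times}) = 0$ and $\gamma$ is trivial. Hence the source algebra of $B$ is the group algebra $\cO \widetilde{L}$ of a $p$-group of order $|D|$, so by Puig's characterization of nilpotent blocks through their source algebras, $B$ is nilpotent.

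The main obstacle is the converse direction: one must invoke the K\"ulshammer--Puig structure theorem carefully and verify that the resulting cocycle is trivial under our $p$-group hypothesis. A more elementary alternative is induction on $|G/N|$, reducing to the case $|G/N|=p$; there one can trace the inertial quotients of $B$ directly via an argument essentially symmetric to the forward direction, using that the image of $N_G(Q,b_Q)$ in $G/N$ is automatically a $p$-group.
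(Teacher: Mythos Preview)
Your proposal is correct, but the paper's own proof is far more terse: it simply says that ``$B$ nilpotent implies $b$ nilpotent is clear'' and cites Cabanes~\cite[Theorem~2]{ca87} for the converse. In other words, the paper treats this proposition as a known fact and defers both directions to the literature.

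Your forward direction unpacks what the paper deems ``clear'' via an explicit Brauer-pair comparison; this is fine and is indeed the standard justification. For the converse you invoke the K\"ulshammer--Puig structure theorem~\cite{kp90} on extensions of nilpotent blocks and then kill the cocycle using $H^2(\widetilde{L}/E,k^\times)=0$ for $p$-groups. This works, but it is a considerably heavier hammer than Cabanes' 1987 result, which predates~\cite{kp90} and gives the statement directly without passing through source algebras. Your alternative inductive route (reducing to $|G/N|=p$) is in fact closer in spirit to Cabanes' original argument. Either way, the mathematics is sound; the only difference is economy---the paper outsources everything to a single citation, while you reprove it from more structural principles.
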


\begin{proof}
That $B$ nilpotent implies $b$ nilpotent is clear. The other direction is~\cite[Theorem 2]{ca87}.
\end{proof}

\begin{lemma}
\label{odd_index_Q_8xC2^n:lemma}
Let $G$ be a finite group and $N \lhd G$ such that $G/N$ is supersolvable of odd order. Let $B$ be a block of $\cO G$ with defect group $Q_8 \times C_{2^n}$ for some $n$ and let $b$ be a block of $\cO N$ covered by $B$. Suppose that $B$ covers no nilpotent block of any normal subgroup containing $N$. Then $B$ and $b$ are Morita equivalent.
\end{lemma}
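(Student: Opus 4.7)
The plan is to induct on $|G/N|$. By supersolvability of $G/N$, pick $M\lhd G$ with $N<M\leq G$ and $M/N$ cyclic of odd prime order $q$. When $M<G$, apply the inductive hypothesis separately to $(M\lhd G, B)$ and to $(N\lhd M, b^M)$, where $b^M$ is the block of $\cO M$ covered by $B$ and covering $b$, to obtain $B\sim_{\Mor}b^M\sim_{\Mor}b$. The non-nilpotency hypothesis propagates: for the first pair it is immediate since normal subgroups of $G$ containing $M$ also contain $N$; for the second, the only intermediate normal subgroups of $M$ between $N$ and $M$ are $N$ and $M$, and both $b$ and $b^M$ are non-nilpotent by the hypothesis applied to $B$. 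This reduces us to the base case $G/N\cong C_q$.

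In the base case, apply Fong--Reynolds: if $b$ is not $G$-invariant then $\Stab_G(b)=N$ and $B\sim_{\Mor}b$ at once, so assume $b$ is $G$-invariant. Since $q$ is odd and $p=2$, we have $D\leq N$, so $D$ is also a defect group of $b$. By hypothesis $B$ and $b$ are non-nilpotent, so Proposition~\ref{numbersimples} gives $l(B)=l(b)=3$. By Lemma~\ref{Dautos:Lem}(ii) the $p'$-part of $\Aut(D)$ is cyclic of order $3$, forcing the inertial quotients $E_B=E_b$ both to equal this unique $C_3\leq\Aut(D)$; in particular the fusion systems agree, $\cF_B=\cF_b$.

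To conclude $B\sim_{\Mor}b$, I would analyze the crossed product $\cO Ge_b\cong b*_\alpha G/N$. The cocycle $\alpha$ lies in $H^2(C_q,\cO^\times)=0$ and hence is trivial. For the outer $G/N$-action, a coset representative $g\in G\setminus N$ acts on $D$ by a $p'$-automorphism whose image in $\Out(D)$ must lie in the Sylow $3$-subgroup, which equals $E_b$; since $E_b$ is realized inside $N$ by elements of $N_N(D,b_D)$, we may multiply $g$ by a suitable such element so that the new $g$ centralizes $D$. Then, by an argument in the spirit of Lemma~\ref{lem:graded_units}, conjugation by $g$ is inner on $b$. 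Thus $\cO Ge_b\cong b\otimes_\cO\cO[C_q]$ splits as $q$ isomorphic copies of $b$, and in particular $B\cong b$. The main obstacle is this last step: the $p$-order trick from Lemma~\ref{lem:graded_units} is not directly available because $g$ has $p'$-order, so a different argument will be required, leveraging the equality of fusion systems $\cF_B=\cF_b$ and the rigid structure (from Proposition~\ref{numbersimples}) of non-nilpotent blocks with defect group $Q_8\times C_{2^n}$ to force the induced element of $\Out_D(ibi)$ to be trivial.
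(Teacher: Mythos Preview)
Your reduction to the case $[G:N]=q$ an odd prime is correct and matches the paper's approach (the paper phrases it as passing along a chief series, which amounts to the same induction). The base case, however, has exactly the gap you identify: having arranged a coset representative $g$ that centralizes $D$, you need $c_g\in\Inn(b)$, and the trick from Lemma~\ref{lem:graded_units} is unavailable since $g$ has $p'$-order. You propose to attack this via $\Out_D(ibi)$, but nothing in the paper computes this group for a non-nilpotent block with defect $Q_8\times C_{2^n}$, and it is not obviously trivial---it is only known to be a $2'$-group, which is no help when $g$ has odd order. So as it stands the argument is genuinely incomplete.

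The paper sidesteps this entirely. Rather than proving directly that the $G/N$-action on $b$ is inner, it invokes a dichotomy from~\cite{kkl12}: if the action is inner, then~\cite[Proposition~2.2]{kkl12} gives $B\sim_{\Mor} b$ at once; if not, then~\cite[Proposition~2.3]{kkl12} forces $B$ to be the \emph{unique} block of $\cO G$ covering $b$, and a Clifford-theoretic count of simple modules now gives a contradiction. Namely, $C_q$ acts on the three-element set $\IBr(b)$: if $q\geq 5$ the action is trivial, so each simple $b$-module extends in $q$ ways and $l(B)=3q>3$; if $q=3$ the action is either trivial (same contradiction, $l(B)=9$) or transitive, whence $l(B)=1$ and $B$ is nilpotent by Proposition~\ref{numbersimples}, contrary to hypothesis. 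Thus the non-inner case is impossible, and one never has to analyse $\Out_D(ibi)$ at all. Note that this argument uses only the numerical constraint $l(B)=l(b)=3$ together with standard Clifford theory, which is considerably lighter than the structural information (equality of fusion systems, structure of the source algebra) that your proposed completion would require.
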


\begin{proof}
Note that $B$ and $b$ must share a defect group, and that for any $M \lhd G$ with $N \leq M$ and any block $C$ of $\cO M$ covered by $B$ we have $l(C)=3$ by Proposition \ref{numbersimples}. By considering a chief series between $N$ and $G$ with prime factors it suffices to consider the case that $[G:N]$ is an odd prime $w$. If $b$ is not $G$-stable, then we are done. Suppose that $b$ is $G$-stable. By~\cite[Proposition 2.2]{kkl12} if $G$ acts as inner automorphisms on $b$, then $B$ and $b$ are Morita equivalent. By~\cite[Proposition 2.3]{kkl12} if $G$ does not act as inner automorphisms on $b$, then $B$ is the unique block of $\cO G$ covering $b$. Consider the action of $G$ on the three irreducible Brauer characters of $b$. If $w \geq 5$, then $G$ must fix every such Brauer character, and $l(B)=wl(b)=3w$, contradicting $l(B)=3$. If $w=3$, then either $G$ fixes each irreducible Brauer character, again a contradiction, or $G$ permutes the irreducible Brauer characters of $b$ transitively and $l(B)=1$, contradicting our assumption that $B$ is not nilpotent (using Proposition \ref{numbersimples}).
\end{proof}

\begin{lemma}
\label{coveringC4xC4:lem}
Let $G$ be a finite group and $N \lhd G$ with $[G:N]=2$. Let $b$ be a non-nilpotent block of $\cO N$ with defect groups isomorphic to $C_4 \times C_4$. Then the unique block $B$ of $\cO G$ covering $b$ cannot have defect group $Q_8 \times C_4$.
\end{lemma}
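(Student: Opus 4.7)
The plan is to argue by contradiction, assuming $B$ has defect group $D \cong Q_8 \times C_4$. Since $[G:N] = 2$ and $B$ covers $b$ (whose defect groups are $C_4 \times C_4$), after replacing $D$ by a $G$-conjugate we may assume $D' := D \cap N$ is a defect group of $b$; thus $D' \cong C_4 \times C_4$ has index $2$ in $D$. Writing $Q_8 = \langle i, j\rangle$ with $k=ij$, Lemma~\ref{Dsubs:Lem}(a)(ii) forces $D'$ to be one of the three subgroups $\langle i\rangle \times C_4$, $\langle j\rangle \times C_4$, $\langle k\rangle \times C_4$ of $D$.

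Next I would read off the inertial quotient. Since $b$ is non-nilpotent with abelian defect group $D'$, its inertial quotient $E_b$ is a non-trivial $2'$-subgroup of $\Aut(D')$; by Lemma~\ref{Dautos:Lem}(i) this forces $E_b \cong C_3$. Because $D'$ is abelian, the fusion system $\mathcal{F}_b$ of $b$ on $D'$ satisfies $\Aut_{\mathcal{F}_b}(D') = E_b \cong C_3$.

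The key fusion-theoretic step is as follows. Since $N \lhd G$, a careful choice of $B$-Brauer pair at $D'$ that covers the chosen $b$-Brauer pair yields an inclusion $\Aut_{\mathcal{F}_b}(D') \subseteq \Aut_{\mathcal{F}_B}(D')$ as subgroups of $\Aut(D')$, so $\Aut_{\mathcal{F}_B}(D')$ contains a subgroup of order $3$. By the resistance proposition preceding Proposition~\ref{none_in_quasisimples:prop}, $D$ is resistant, so $\mathcal{F}_B = N_{\mathcal{F}_B}(D)$ is controlled. Hence any order-three element $\phi \in \Aut_{\mathcal{F}_B}(D')$ extends to some $\bar\phi \in \Aut_{\mathcal{F}_B}(D) \leq \Aut(D)$ with $\bar\phi(D') = D'$ and $\bar\phi|_{D'} = \phi$, and in particular $\bar\phi$ has order divisible by $3$.

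The contradiction comes from the $\Aut(D)$-action on the three $C_4 \times C_4$ subgroups of $D$. By Lemma~\ref{Dautos:Lem}(ii), $\Aut(D)$ is a $\{2,3\}$-group with Sylow $3$-subgroup $C_3$, and a generator of this $C_3$ cyclically permutes $\langle i\rangle, \langle j\rangle, \langle k\rangle \leq Q_8$ while acting trivially on the $C_4$-factor, hence permutes the three $C_4 \times C_4$ subgroups of $D$ transitively. Thus the stabilizer of $D'$ in $\Aut(D)$ has index $3$, so is a $2$-group, contradicting the existence of $\bar\phi$. I expect the main obstacle to be justifying rigorously the fusion-subsystem inclusion $\Aut_{\mathcal{F}_b}(D') \subseteq \Aut_{\mathcal{F}_B}(D')$, which is where the assumption $N \lhd G$ (and the resulting compatibility of Brauer pairs in $C_N(D') \leq C_G(D')$) is essential; the remaining steps are then straightforward group-theoretic computations in $\Aut(D)$.
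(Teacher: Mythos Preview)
Your fusion-theoretic argument is correct, and the step you flag as the main obstacle --- the inclusion $\Aut_{\mathcal{F}_b}(D') \subseteq \Aut_{\mathcal{F}_B}(D')$ --- is a standard (if somewhat delicate) consequence of the compatibility of Brauer pairs under $N \lhd G$; once that is in place, the remaining steps (resistance of $D$, extension of $\phi$ to $\bar\phi$, and the computation that no odd-order element of $\Aut(D)$ stabilises a $C_4\times C_4$ subgroup) go through exactly as you describe.

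The paper's own proof, however, is entirely different and much shorter: it is a pure character-counting argument. Assuming $B$ has defect group $Q_8 \times C_4$, Proposition~\ref{numbersimples} gives $k(B) \geq 20$, while \cite[Theorem~1.1]{ekks14} gives $k(b)=8$ for a non-nilpotent block with defect group $C_4 \times C_4$. Clifford theory then bounds the number of irreducible characters of $G$ lying over $\Irr(b)$ by $2 \cdot 8 = 16 < 20$, a contradiction. Your route is more structural and stays within the fusion-system machinery already set up in the paper (resistance of $Q_8\times C_{2^n}$, Lemma~\ref{Dautos:Lem}), making the obstruction visible as a concrete incompatibility inside $\Aut(D)$; the paper's route trades that transparency for brevity by importing the invariant $k(b)=8$ from \cite{ekks14} and avoiding any discussion of Brauer-pair compatibility or the $\Aut(D)$-action on index-$2$ subgroups.
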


\begin{proof}
We assume the contrary. By Proposition \ref{numbersimples} we have $k(B)\geq 20$ and by~\cite[Theorem 1.1]{ekks14} $k(b)=8$. But by Clifford theory the number of irreducible characters of $G$ lying over irreducible characters of $b$ is at most $16$, and the result follows.
\end{proof}

A block of a finite group $G$ is described as quasiprimitive if every covered block of a normal subgroup of $G$ is $G$-stable. The following is proved in~\cite[Lemma 2.4]{ar19}.

\begin{lemma}
\label{solv_quotient:lemma}
Let $G$ be a finite group and $B$ be a quasiprimitive block of $\cO G$ with defect group $D$. Let $N \lhd G$ such that $G/N$ is solvable and $b$ the unique block of $\cO N$ covered by $B$. Then $ND/N \in \Syl_p(G/N)$.
\end{lemma}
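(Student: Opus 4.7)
The plan is to induct along a chief series of $G$ through $N$. Since $G/N$ is solvable we may choose
\[ N = N_0 \lhd N_1 \lhd \cdots \lhd N_r = G \]
with each $N_i \lhd G$ and each factor $N_{i+1}/N_i$ an elementary abelian $q_i$-group for some prime $q_i$. Let $b_i$ denote the unique block of $\cO N_i$ covered by $B$; uniqueness and $G$-stability (in particular $N_{i+1}$-stability) of each $b_i$ follow at once from the quasiprimitivity of $B$.

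The key input is the standard fact that for $K \lhd H$ with $H/K$ solvable and $c$ an $H$-stable block of $K$, any block of $H$ covering $c$ has a defect group $D'$ such that $D' \cap K$ is a defect group of $c$ and $|D'/(D' \cap K)| = |H/K|_p$. Applied to each pair $N_i \lhd N_{i+1}$ this splits into two routine cases: if $q_i \neq p$ then $|N_{i+1}/N_i|_p = 1$ and, after $N_{i+1}$-conjugacy, a defect group of $b_{i+1}$ lies in $N_i$; if $q_i = p$ then $N_{i+1}/N_i$ is itself a $p$-group, the $N_{i+1}$-stability of $b_i$ forces $b_{i+1}$ to be the unique block of $\cO N_{i+1}$ covering $b_i$ with $e_{b_{i+1}} = e_{b_i}$, and the standard block-of-$p$-power-index argument of K\"ulshammer gives $|D'/(D' \cap N_i)| = |N_{i+1}/N_i|$.

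Putting the steps together, I would choose defect groups compatibly by setting $D_r := D$ and, for $i < r$, $D_i := D_{i+1} \cap N_i$, replacing $D_{i+1}$ by an $N_{i+1}$-conjugate if necessary so that $D_i$ is an actual defect group of $b_i$. The individual identities above telescope to
\[ \bigl|D/(D \cap N)\bigr| \;=\; \prod_{i=0}^{r-1} \bigl|D_{i+1}/D_i\bigr| \;=\; \prod_{i=0}^{r-1} \bigl|N_{i+1}/N_i\bigr|_p \;=\; |G/N|_p, \]
which is exactly the statement $DN/N \in \Syl_p(G/N)$.

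The main technical point is the compatibility of defect-group choices along the series, since at each level the defect group is only determined up to conjugacy; once this is handled the result is an immediate consequence of the classical covering-block statements cited above. The $q_i=p$ step is where all of the content sits: it is the one place where the quasiprimitivity hypothesis (producing an $N_{i+1}$-stable $b_i$) is genuinely used, and it is what forces the Sylow condition at that level.
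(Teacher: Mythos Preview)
Your argument is correct and is the standard proof of this fact. The paper itself gives no proof at all: it merely records the statement and cites \cite[Lemma~2.4]{ar19}, so there is no in-paper argument to compare against.

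Two small remarks on exposition. First, the caveat about ``replacing $D_{i+1}$ by an $N_{i+1}$-conjugate if necessary'' can be dropped. Since $B$ is quasiprimitive, each $b_i$ is $G$-stable, and it is then standard that for \emph{every} defect group $D$ of $B$ the intersection $D\cap N_i$ is already a defect group of $b_i$ (the paper itself uses exactly this in the proof of Lemma~\ref{lem:graded_units}). Hence you may simply set $D_i:=D\cap N_i$ for all $i$; these form a genuine chain of defect groups with no conjugation needed, and the compatibility concern in your final paragraph disappears. Second, the ``standard fact'' you announce for general solvable $H/K$ is essentially the lemma being proved, so it reads a bit circularly; it would be cleaner to state it only for $H/K$ of prime order, which is what you actually use and justify in the two routine cases.
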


Recall that a $p$-solvable group $G$ has $p$-length one if there are normal subgroups $N, M \lhd G$, with $N \leq M$, such that $N$ and $G/M$ are $p'$-groups and $M/N$ is a $p$-group. The abelian Sylow $2$-subgroup case of the following is well-known, and possibly also the $Q_8$ case, but since we do not know a reference we include a proof. 

\begin{lemma}
\label{two_length:lemma}
Let $G$ be a solvable group with Sylow $2$-subgroups which are abelian or $Q_8$. Then $G$ has $2$-length one. If further $G$ has cyclic $2$-subgroups, then it is $2$-nilpotent.
\end{lemma}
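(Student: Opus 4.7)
The plan is to reduce modulo the normal $2$-complement core and then use the Fitting self-centralising property. Set $N := O_{2'}(G)$ and pass to $\bar G := G/N$. Since $|N|$ is odd, a Sylow $2$-subgroup $P$ of $G$ maps isomorphically to a Sylow $2$-subgroup of $\bar G$. Because $\bar G$ is solvable with $O_{2'}(\bar G)=1$, the Fitting subgroup satisfies $F(\bar G)=O_2(\bar G)$ and $C_{\bar G}(O_2(\bar G))\leq O_2(\bar G)$. Showing that $G$ has $2$-length one is equivalent to $\bar G/O_2(\bar G)$ being a $2'$-group, i.e.\ to $O_2(\bar G)$ being a (hence the) Sylow $2$-subgroup of $\bar G$.

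If $P$ is abelian, then $P$ (identified with its image in $\bar G$) centralises the abelian subgroup $O_2(\bar G)\leq P$, so $P\leq C_{\bar G}(O_2(\bar G))\leq O_2(\bar G)\leq P$, forcing $O_2(\bar G)=P$. If $P\cong Q_8$, I would rule out the proper possibilities for $O_2(\bar G)\leq P$ in turn: the case $O_2(\bar G)=1$ would force $\bar G=1$ (absurd as $|P|=8$ divides $|\bar G|$); the case $O_2(\bar G)=Z(P)\cong C_2$ would give $P\leq C_{\bar G}(Z(P))\leq Z(P)$, absurd; and the case $O_2(\bar G)\cong C_4$ would yield $\bar G/O_2(\bar G)\hookrightarrow \Aut(C_4)\cong C_2$, so $|\bar G|\leq 8=|P|$ and $\bar G=P$, forcing $O_2(\bar G)=P$, a contradiction. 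Hence $O_2(\bar G)=P$, and $G$ has $2$-length one in both cases.

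For the final clause, assume $P$ is cyclic (so in particular abelian), so $O_2(\bar G)=P$ by the above. Then $\bar G/P$ embeds in $\Aut(P)$. Since $P$ is a cyclic $2$-group, $\Aut(P)$ is itself a $2$-group, while $\bar G/P$ has odd order as $P$ is a Sylow $2$-subgroup of $\bar G$. Hence $\bar G=P$, so $G/O_{2'}(G)$ is a $2$-group and $G$ is $2$-nilpotent.

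The argument is essentially routine once one commits to working in the $O_{2'}$-quotient; the only mildly delicate point is the $Q_8$ case, where one must carefully dispose of the intermediate candidate $O_2(\bar G)\cong C_4$ using $|\Aut(C_4)|=2$, and this is where most of the bookkeeping lies.
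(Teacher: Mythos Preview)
Your proof is correct and follows essentially the same approach as the paper's: reduce to $O_{2'}=1$, use that $C_G(O_2(G))\leq O_2(G)$, handle the abelian case via $P\leq C_G(O_2(G))$, and in the $Q_8$ case run through the possible normal $2$-subgroups of $Q_8$. The only cosmetic difference is that the paper phrases the $O_2(G)=Z(P)$ and $O_2(G)\cong C_4$ cases as ``we are done'' (implicitly because $G$ is then forced to be a $2$-group, hence $O_2(G)=P$ after all), whereas you explicitly extract the contradictions; your version is arguably clearer on this point.
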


\begin{proof}
We may assume that $O_{2'}(G)=1$, 
so that $O_2(G) \neq 1$ and $C_G(O_2(G)) \leq O_2(G)$. Let $P \in \Syl_2(G)$. If $P$ is abelian, then $P \leq C_G(O_2(G)) \leq O_2(G)$ and we are done. Suppose that $P \cong Q_8$. Since $Z(P)$ is the unique subgroup of $P$ of order two, we have $Z(P) \leq O_2(G)$. If $O_2(G)=Z(P)$ or $O_2(G)=P$, then we are done. If $O_2(G) \cong C_4$, then $G/O_2(G)$ is a $2$-group and again we are done.

In the case that $P$ is cyclic, the fact that $G$ is $2$-nilpotent follows from $\Aut(P)$ being a $2$-group. 
\end{proof}

The following is by now a standard reduction when treating Donovan's conjecture, using Fong reductions and~\cite{kp90}. 
\begin{proposition}
\label{reduce:prop}
Let $G$ be a finite group and let $B$ be a block of $\cO G$ with defect group $D$. There is a finite group $H$ with $[H:O_{p'}(Z(H))] \leq [G:O_{p'}(Z(G))]$ and a block $C$ of $\cO H$ with defect group $P \cong D$ such that $B$ is Morita equivalent to $C$ and the following are satisfied:
\begin{enumerate}
\item[(R1)] $C$ is quasiprimitive;
\item[(R2)] If $N \lhd H$ and $C$ covers a nilpotent block $c$ of $\cO N$, then $N \leq Z(H) O_p(H)$;
\item[(R3)] $O_{p'}(Z(H)) \leq [H,H]$.
\end{enumerate}
\end{proposition}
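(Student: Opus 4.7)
The plan is to apply three standard reductions, each preserving the Morita equivalence class of $B$ and the isomorphism type of its defect group, and each non-increasing in the invariant $\alpha(H) := [H:O_{p'}(Z(H))]$. I would iterate the reductions for (R1) and (R2) until both hold, and then apply the reduction for (R3) once.

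For (R1), I would use the Fong-Reynolds correspondence: if the current block $B$ of $\cO G$ covers a non-$G$-stable block $b$ of some $\cO N$ for $N \lhd G$, replace $(G,B)$ by $(\Stab_G(b), B')$, where $B'$ is the unique block of $\cO\Stab_G(b)$ covering $b$. Then $B'$ is Morita equivalent to $B$ via induction, shares its defect group, lies in a proper subgroup containing $O_{p'}(Z(G))$, and so $|H|$ decreases strictly while $\alpha$ does not grow. Iterating yields quasiprimitivity.

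For (R2), I would invoke the K\"ulshammer-Puig theorem on extensions of nilpotent blocks~\cite{kp90}: if the quasiprimitive block $C$ of $\cO H$ covers a nilpotent block $c$ of $\cO N$ with $N \not\leq Z(H)O_p(H)$, then $C$ is source-algebra equivalent (hence Morita equivalent) to a block of a strictly smaller finite group, namely a central $p'$-extension of an extension of $H/N$ by a defect group of $c$, and $\alpha$ remains controlled. Since this step may destroy (R1), I would interleave Fong-Reynolds and K\"ulshammer-Puig reductions; as each nontrivial step strictly decreases $|H|$, the process terminates in a pair satisfying both (R1) and (R2).

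Finally, for (R3), let $Z := O_{p'}(Z(H))$ and choose a complement $Z_0$ to $Z \cap [H,H]$ inside $Z$. Decomposing $\cO H$ along the characters of the central $p'$-subgroup $Z_0$ shows that $C$ corresponds to a Morita equivalent block of $\cO(H/Z_0)$ with the same defect group; we then have $O_{p'}(Z(H/Z_0)) = Z/Z_0 \cong Z \cap [H,H]$, which lies in $[H/Z_0, H/Z_0]$ by construction, while $\alpha(H/Z_0) = [H:Z] = \alpha(H)$. This final step preserves (R1) and (R2) via the correspondence between normal subgroups of $H/Z_0$ and normal subgroups of $H$ containing $Z_0$, which carries covered blocks to covered blocks and nilpotent blocks to nilpotent blocks. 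The main obstacle is the interleaving of (R1) and (R2), since each can temporarily destroy the other; termination is guaranteed by the strict decrease of $|H|$ at each nontrivial Fong-Reynolds or K\"ulshammer-Puig step, and the uniform bound on $\alpha$ throughout is the key bookkeeping point.
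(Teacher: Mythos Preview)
Your outline is exactly the argument the paper invokes: it simply cites~\cite[Proposition~4.3]{eel19} and remarks that this is ``a standard reduction \ldots\ using Fong reductions and~\cite{kp90}'', i.e.\ Fong--Reynolds for (R1), K\"ulshammer--Puig for (R2), and a central $p'$-quotient for (R3).

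One genuine slip: your termination argument runs on $|H|$, but the K\"ulshammer--Puig step does \emph{not} in general shrink $|H|$. The new group is a $p'$-central extension (with kernel governed by a $2$-cocycle on $H/N$), and its order can exceed $|H|$. The invariant that does strictly decrease is $\alpha(H)=[H:O_{p'}(Z(H))]$ itself: after replacing $N$ by $N\,O_{p'}(Z(H))$ (the covered block stays nilpotent), the assumption $N\not\le Z(H)O_p(H)$ forces $|N|_{p'}>|O_{p'}(Z(H))|$, and since the new central $p'$-kernel is absorbed into $O_{p'}(Z(\cdot))$ one gets $\alpha$ of the new pair bounded by $|D\cap N|\cdot|H/N|<\alpha(H)$. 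So run both the termination and the bound on $\alpha$, as you already do for the latter.

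Your (R3) step is fine as a single pass: because $Z_0\cap[H,H]=1$, any commutator lying in $Z_0$ is trivial, whence $Z(H/Z_0)=Z(H)/Z_0$ exactly and no iteration is needed.
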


\begin{proof}
See the first part of the proof of~\cite[Proposition 4.3]{eel19}.
\end{proof}

For the purpose of this article, we call the pair $(H,C)$, where $C$ is a block of $\cO H$, \emph{reduced} if it satisfies conditions (R1), (R2) and (R3) of Proposition \ref{reduce:prop}. If the group is clear, then we just say $C$ is reduced. The property of being reduced is very restrictive in our situation:

\begin{proposition}
\label{structureQ8xC2n:prop}
Let $(G,B)$ be a reduced pair, where $B$ has defect group $D$.
\begin{enumerate}[(a)]
\item If $D \cong Q_8 \times C_{2^n}$, then either:
\begin{enumerate}[(i)]
\item there is $N \lhd G$ such that $G=ND$ and $D \cap N \cong Q_8$ (so $G/N$ is cyclic); or
\item there is $N \lhd G$, a quasisimple group that is a quotient of a group of classical Lie type other than $\mathbf{A}$ or ${}^2\mathbf{A}$, such that $G/N$ is $2$-nilpotent with cyclic Sylow $2$-subgroup $ND/N$ and $O_{2'}(G/N)$ is supersolvable. The unique block of $\cO N$ covered by $B$ is not quasi-isolated and $D \cap N$ has a subgroup isomorphic to $Q_8$.
\end{enumerate}
\item If $D \cong Q_8 \times Q_8$, then either:
  \begin{enumerate}[(i)] 
  \item there are normal subgroups $N \lhd H \lhd G$ such that $H=ND$, $D \cap N \cong Q_8$ or $Q_8 \times C_2$, and $[G:H]$ is odd; or
  \item there are commuting, normal subgroups $N_1, N_2 \lhd G$ such that $N_1 \cap N_2 \leq Z(G)$, $D \cap N_1$, $D \cap N_2 \cong Q_8$ or $Q_8 \times C_2$, and $[G:N_1N_2]$ is odd.
  \end{enumerate} 
  
\end{enumerate}
\end{proposition}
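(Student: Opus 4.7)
The plan is to exploit condition (R2) throughout: any normal subgroup $N\lhd G$ whose covered block is nilpotent must lie inside $Z(G)O_2(G)$. In particular, every component $L$ of $G$ (being perfect and non-abelian) has a non-nilpotent covered block $b_L$, so its defect group $D\cap L$ admits a non-trivial $p'$-automorphism in its fusion system. Combining Lemmas \ref{Dsubs:Lem}, \ref{Dautos:Lem} and \ref{cyclicnilpotent}, the only subgroups of $D\cong Q_8\times C_{2^n}$ (resp.\ $Q_8\times Q_8$) that can serve as defect groups of non-nilpotent blocks are $C_2\times C_2$, $C_4\times C_4$ (when it embeds), $Q_8$, or $Q_8\times C_{2^m}$; in particular, cyclic, mixed-rank $C_{2^m}\times C_{2^{m'}}$ with $m\neq m'$, and $C_4\rtimes C_{2^m}$ are all ruled out.

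For part (a), I would first argue that $G$ has at most one component whose defect contains $Q_8$: distinct components commute, so their defects commute inside $D$, but $Q_8\times Q_8\not\leq Q_8\times C_{2^n}$. Any additional component would have to have abelian defect of type $C_2\times C_2$ or $C_4\times C_4$, and then an application of (R2) to the product with centralizers rules these out. If the unique component $L$ satisfies $D\cap L\cong Q_8\times C_{2^m}$ for some $m\geq 1$, then Proposition \ref{none_in_quasisimples:prop} identifies $L$ as a quasisimple quotient of a classical Lie type group not of type $\mathbf{A}$ or ${}^2\mathbf{A}$, with non-quasi-isolated covered block. To describe $G/L$, one uses that by quasiprimitivity (R1) and Lemma \ref{solv_quotient:lemma}, $LD/L$ is a Sylow $2$-subgroup, cyclic because it is a quotient of $D/(D\cap L)\cong C_{2^{n-m}}$; the outer automorphism group of $L$ is an extension of a cyclic field-automorphism group by graph/diagonal parts, so $O_{2'}(G/L)$ is supersolvable; and $G/L$ solvable with cyclic Sylow $2$-subgroup is $2$-nilpotent by Lemma \ref{two_length:lemma}. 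This yields conclusion (ii). If there is no such component (either no component at all, or components whose defect is just $Q_8$), take $N$ to be the preimage in $G$ of $O^2(G/E(G))$ (or $O^2(G)$), so that $G/N$ is a $2$-group and $G=ND$; conclusion (i) follows once one checks $D\cap N\cong Q_8$ by ruling out, via (R2) and the subgroup classification, larger intersections.

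Part (b) is analogous. Since Proposition \ref{none_in_quasisimples:prop} precludes any component of defect $Q_8\times Q_8$, and Lemma \ref{noQ8xQ8ext:Lemma} excludes the extension scenario that would build such a defect from a $Q_8\times C_4$ block inside $Sp_{2r}(q)$, each component $N_i$ has covered defect $Q_8$ or $Q_8\times C_2$. If there is just one such distinguished normal subgroup $H\lhd G$, one sets $N$ inside $H$ as in part (a); the odd-index condition $[G:H]$ odd comes from Lemma \ref{Dautos:Lem}(iii) combined with (R1) (the Sylow $2$-subgroup of $G/H$ is already accounted for by $D$). If there are two commuting normal subgroups $N_1,N_2$ with defect $Q_8$ or $Q_8\times C_2$, then $N_1\cap N_2$ is abelian with nilpotent covered block, hence central by (R2), and $[G:N_1N_2]$ is again odd by the same automorphism-group argument — giving conclusion (ii).

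The main obstacle will be the alternative-to-component case in part (a): verifying that when no component has defect properly containing $Q_8$, one can genuinely locate $N\lhd G$ with $G=ND$ \emph{and} $D\cap N\cong Q_8$ (rather than some larger intersection such as $Q_8\times C_{2^{n-1}}$). This forces a careful analysis of the chain of possible $D\cap M$ as $M$ runs through normal subgroups, using (R2) together with Lemma \ref{cyclicnilpotent} to eliminate each intermediate candidate. A parallel subtlety in part (b) is distinguishing cases (i) and (ii), i.e.\ recognizing when two commuting normal subgroups of the required type genuinely exist rather than being amalgamated inside a single $H$.
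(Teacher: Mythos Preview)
Your outline follows the paper's broad strategy (components, then structure of $G$ modulo the layer), but there are two genuine gaps where the argument as written would not go through.

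\textbf{Components with defect $C_2\times C_2$ or $C_4\times C_4$.} You list these among the possible non-nilpotent defect groups, yet in part~(a) you dismiss them with ``an application of (R2) to the product with centralizers rules these out''. That is not correct: (R2) only bites when the covered block is \emph{nilpotent}, and blocks with defect $C_2\times C_2$ or $C_4\times C_4$ are typically not. The paper has to treat these cases separately at the end of the proof. For $D_1\cong C_4\times C_4$ it invokes Lemma~\ref{coveringC4xC4:lem}, a Clifford-theoretic character count showing that no block with defect $Q_8\times C_4$ can cover a non-nilpotent $C_4\times C_4$-block of an index-$2$ normal subgroup. For $D_1\cong C_2\times C_2$ it passes to an intermediate normal subgroup $M_1\supset M$ with $[M_1:M]=2$; the covered block of $M_1$ then has defect $C_4\times C_2$, hence is nilpotent by Lemma~\ref{cyclicnilpotent}, forcing $b_1$ nilpotent via Proposition~\ref{nilpotent_index_p:prop} and giving the contradiction with (R2). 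Your sketch contains neither step.

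\textbf{The case $E(G)=1$.} Here your recipe ``take $N$ to be $O^2(G)$'' has no traction without further structural input: you need something forcing $O_2(G)$ to be large. The paper supplies this via the generalized Fitting subgroup: when there are no components, $F^*(G)=Z(G)O_2(G)$ and $C_G(F^*(G))\leq F^*(G)$, so $G/F^*(G)\hookrightarrow\Out(O_2(G))$ and $O_2(G)$ is self-centralizing in $D$. One then runs through the (short) list of self-centralizing normal subgroups of $D$ from Lemma~\ref{Dsubs:Lem}, using Lemma~\ref{Dautos:Lem} (and again Lemma~\ref{coveringC4xC4:lem} when $O_2(G)\cong C_4\times C_4$) to reduce to $O_2(G)=D$, after which $G\cong D\rtimes E$ with $E$ of odd order and the required $N$ is visible. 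Your ``main obstacle'' paragraph correctly identifies that pinning down $D\cap N\cong Q_8$ is delicate; the missing ingredient is precisely this $F^*(G)$ argument, which you do not mention. A similar analysis is needed in part~(b) for $O_2(G)\cong Q_8\times C_4$ and $C_4\times C_4$.

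A smaller point: your argument that there is at most one component in case~(a) (``$Q_8\times Q_8\not\leq Q_8\times C_{2^n}$'') only bounds the number of components whose defect contains $Q_8$; it says nothing about additional components with abelian defect. The paper instead passes to $E(G)/O_2(E(G))$, observes that the defect of the corresponding block has $2$-rank at most that of $D$, and concludes that with $t\geq 2$ (case~(a)) or $t\geq 3$ (case~(b)) some $D_iZ_E/Z_E$ is cyclic, whence the corresponding component block is nilpotent---this is where (R2) actually enters.
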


\begin{proof}
Let $G$ be a finite group and $B$ a block of $\cO G$ with defect group $D \cong Q_8 \times C_{2^n}$ or $Q_8 \times Q_8$ for some $n \geq 1$, and suppose that $(G,B)$ is reduced. Write $E(G)$ for the layer of $G$, the central product of all of the components of $G$, and $F^*(G)$ for the generalized Fitting subgroup (see~\cite{asc00}). Then $F^*(G)=E(G)Z(G)O_2(G)$ by our assumption, and $C_G(F^*(G)) \leq F^*(G)$ because of general properties of $F^*(G)$. Write $E(G)$ for the central product $L_1 \circ \cdots \circ L_t$, where the $L_i$ are the components of $G$. Write $B_E$ for the unique block of $\cO E(G)$ covered by $B$. Then (using the fact that $B_E$ is $G$-stable) $D_E:=D \cap E(G)$ is a defect group for $B_E$ and $D_i:=D \cap L_i$ is a defect group for the unique block $b_i$ of $\cO L_i$ covered by $B_E$. By Lemma \ref{Dsubs:Lem} $O_2(G)$, $D_E$ and each $D_i$ is of the form $1$, $C_{2^m}$, $C_2 \times C_2$, $C_4 \times C_{2^m}$, $Q_8$, $Q_8 \times C_{2^m}$, $Q_8 \times Q_8$, $C_4 \rtimes C_{2^m}$ or $C_4 \rtimes Q_8$ for some $m$ (with $m \geq 2$ in the eighth case). Write $Z_E:=O_2(E(G))$. Note that $Z_E$ is central in $E(G)$. Now the unique block $\overline{B}_E$ of $\cO (E(G)/Z_E)$ corresponding to $B_E$ has defect group $\overline{D}_E:=D_E/Z_E \cong D_1Z_E/Z_E \times \cdots \times D_tZ_E/Z_E$ and has $2$-rank at most four (we are using that $\overline{B}_E$ corresponds to a block of a direct product of quasisimple groups of which $E(G)/Z_E$ is a quotient by a $2'$-group).

Suppose $t \geq 3$, or $t=2$ in case (a). Then at least one $D_iZ_E/Z_E$ is a cyclic $2$-group, so that the unique block $\overline{b}_i$ of $L_iZ_E/Z_E$ corresponding to $b_i$ is nilpotent. Let $M$ be the product of those $L_i$ such that $\overline{b}_i$ is nilpotent. Then $M$ must be a normal subgroup (since $G$ permutes the components). Let $B_M$ be the unique block of $\cO M$ covered by $B$ and $\overline{B}_M$ be the unique block of $MZ_E/Z_E$ corresponding to $B_M$. Now $\overline{B}_M$ is isomorphic to a product of nilpotent blocks and so is itself nilpotent ($\overline{B}_M$ corresponds to a block of a direct product of quasisimple groups of which $MZ_E/Z_E$ is a quotient by a $2'$-group). By~\cite{wa94} $B_M$ is then nilpotent, a contradiction to our assumption that $B$ is reduced. Hence $t \leq 2$, with equality only if $D \cong Q_8 \times Q_8$. It follows from Schreier's conjecture and Lemmas~\ref{Dsubs:Lem} and~\ref{Dautos:Lem} that $G/F^*(G)$ has a solvable subgroup of index at most two (namely the subgroup stabilizing the components if $t \geq 1$, and $G/F^*(G)$ itself if there are no components) and so $G/F^*(G)$ is itself solvable. Similarly $G/E(G)$ is solvable. By Lemma \ref{solv_quotient:lemma} $DF^*(G)/F^*(G)$ is a Sylow $2$-subgroup of $G/F^*(G)$ and $DE(G)/E(G)$ is a Sylow $2$-subgroup of $G/E(G)$, facts we will use frequently.

Suppose that $t=2$, in which case $D \cong Q_8 \times Q_8$. Note also that each component is normal in $G$ since $G/E(G)$ is of odd order (i.e., there is no involution permuting the two components). By considering all of the possible expressions of $Q_8 \times Q_8$ as a central product of two groups in Lemma \ref{Dsubs:Lem}, we must have (without loss of generality) $D_1$ and $D_2 \cong Q_8$ or $Q_8 \times C_2$, otherwise $B$ covers a block of a component with cyclic defect group, which forces a contradiction as in the previous paragraph. Note that $O_2(G) \leq Z(G)$ in this case. We are now in case (b)(ii) of the statement, with $N_i=L_i$.

Suppose that $t=0$. Then $F^*(G)=O_2(G)Z(G)$. Since $C_G(F^*(G)) \leq F^*(G)$, we have $G/F^*(G) \leq \Out(O_2(G))$. In particular $O_2(G)$ is a self-centralizing subgroup of $D$. Note that if $D = O_2(G)$, then $G\cong D\rtimes E$ for some $2'$-group $E$. It follows from Lemma~\ref{Dautos:Lem} that $G/C_E(D)$ is a subgroup of $SL_2(3) \times C_{2^n}$ or $SL_2(3)\times SL_2(3)$ and we must be in case (a)(i) or (b)(i) respectively. Suppose $O_2(G)$ is a proper subgroup of $D$. Note that, by Lemma \ref{Dautos:Lem}, all self-centralizing subgroups of $D$ have solvable automorphism group.
Suppose first that $D \cong Q_8 \times C_{2^n}$. By Lemma \ref{Dsubs:Lem} the only proper self-centralizing normal subgroup of $D$ is $C_4 \times C_{2^n}$ and we need only consider the case $O_2(G) \cong C_4 \times C_4$ as in the other cases the automorphism group is a $2$-group. In this case we must have that $G/O_2(G) \cong S_3$ and $B$ covers a non-nilpotent block of $O^2(G)$. However this cannot happen by Lemma \ref{coveringC4xC4:lem}. Now suppose that $D \cong Q_8 \times Q_8$. By Lemma \ref{Dsubs:Lem} the proper self-centralizing normal subgroups of $D$ are $C_4 \times C_4$, $Q_8 \times C_4$ and $C_4 \rtimes Q_8$, of which we need only consider $C_4 \times C_4$ and $Q_8 \times C_4$ since $C_4 \rtimes Q_8$ has automorphism group a $2$-group. If $O_2(G) \cong C_4 \times C_4$, then $G/F^*(G)$ has order $12$ and so has a non-trivial normal $2$-subgroup, a contradiction. If $O_2(G) \cong Q_8 \times C_4$, then $G/F^*(G) \cong S_3$. This forces $G \cong SL_2(3) \times Q_8$ and we are in case (a)(i).



Now suppose that $t=1$, so $F^*(G)=O_2(G)Z(G)L_1$.
Since, by Lemma \ref{cyclicnilpotent}, every block with defect group $C_4 \rtimes Q_8$, $C_4 \rtimes C_{2^m}$ or $C_4 \times C_{2^s}$ is nilpotent for $m \geq 2$ and $s \neq 2$, we may assume, by Lemma \ref{Dsubs:Lem}, that $D_1 \cong Q_8 \times Q_8$, $Q_8 \times C_{2^m}$, $Q_8$, $C_4 \times C_4$ or $C_2 \times C_2$ for some $m \geq 1$. We treat each of these cases in turn.

By Proposition \ref{none_in_quasisimples:prop} we cannot have $D_1 \cong Q_8 \times Q_8$. 

Suppose that $D_1 \cong Q_8 \times C_{2^m}$ for $m \geq 1$. Consider first the case $D \cong Q_8 \times C_{2^n}$. By Proposition \ref{none_in_quasisimples:prop} $L_1$ is of classical type other than $\mathbf{A}$ or ${}^2\mathbf{A}$ as in case (a)(ii) of the statement with $N=L_1$. That $G/N$ has cyclic Sylow $2$-subgroup $ND/N$ is immediate, and so by Lemma \ref{two_length:lemma} $G/N$ is $2$-nilpotent. Considering the outer automorphism groups of such quasisimple groups we have that $O_{2'}(G/L_1)$ is supersolvable as required. As a note to this last calculation, observe that unless $L_1$ has type $\mathbf{D}_4$ the only odd order elements of the outer automorphism group are field automorphisms, so that $O_{2'}(G/L_1)$ is cyclic. In the case of type $\mathbf{D}_4$ we may either analyse~\cite[Theorem 1.1]{an20} a little more deeply and observe that this case does not after all occur, or observe that a Hall $2'$-subgroup of $\Out(L_1)$ is a subgroup of $C_3 \times Y$ where $Y$ is cyclic, implying that $O_{2'}(G/L_1)$ is supersolvable.

Now suppose that $D_1 \cong Q_8 \times C_{2^m}$ for $m \geq 1$ and $D \cong Q_8 \times Q_8$. Then $1 \leq m \leq 2$. The case $m=2$ is ruled out by Proposition \ref{none_in_quasisimples:prop} and Lemma \ref{noQ8xQ8ext:Lemma}. If $m=1$, then we are in case (b)(i).

Suppose that $D_1 \cong Q_8$. Then since $G/L_1$ is a solvable group which has cyclic or $Q_8$ Sylow $2$-subgroup, by Lemma \ref{two_length:lemma}, it has $2$-length one. Let $N$ be the preimage in $G$ of $O_{2'}(G/L_1)$. Then the unique block $B_N$ of $\cO N$ covered by $B$ has defect group $Q_8$.  If $D/D_1$ is cyclic, then by Lemma \ref{two_length:lemma} $G/L_1$ is $2$-nilpotent and we are in case (a)(i) of the statement. If $D/D_1 \cong Q_8$, then let $H = O^{2'}(G)$ and we are in case (b)(i) of the statement.



Finally we claim that we cannot have $D_1 \cong C_4 \times C_4$ or $C_2 \times C_2$. Suppose that we do and so $D/D_1$ is an abelian. Let $M$ be the preimage in $G$ of $O_{2'}(G/L_1)$, and let $B_M$ be the unique block of $\cO M$ covered by $B$. Then $B_M$ has defect group $D_1$. Now there is a subgroup $M_1$ of $G$ containing $M$ with $[M_1:M]=2$. The unique block $B_{M_1}$ of $\cO M_1$ covering $B_M$ has defect group $Q_8 \times C_4$ or $C_4\rtimes Q_8$ (if $D_1 \cong C_4 \times C_4$) or $C_4 \times C_2$ (if $D_1 \cong C_2 \times C_2$), since it is a subgroup of $D$. In the $Q_8\times C_4$ case we obtain a contradiction by Lemma \ref{coveringC4xC4:lem}. In the $C_4\rtimes Q_8$ and $C_4\times C_2$ cases by Lemma \ref{cyclicnilpotent} $B_M$, and so by Proposition \ref{nilpotent_index_p:prop} $b_i$, must be nilpotent, a contradiction.

\end{proof}

\begin{corollary}
\label{sfQ8xC:cor}
Let $G$ be a finite group and $B$ a block of $\cO G$ with defect group $D$ isomorphic to $Q_8 \times C_{2^n}$ for $n \geq 0$. Then $\SOF(B) \leq (|D|^2)!$ and ${\rm c}(B) \leq 2^{n+3}$.
\end{corollary}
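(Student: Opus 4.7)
The plan is to reduce via Proposition~\ref{reduce:prop} to the case that $(G,B)$ is a reduced pair, noting that both $\SOF$ and the largest Cartan entry ${\rm c}$ are invariants of $\cO$-Morita equivalence, and then to apply the structural dichotomy of Proposition~\ref{structureQ8xC2n:prop}(a) to split the argument into the two cases (i) and (ii) given there.

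In case (i), $G=ND$ with $D\cap N\cong Q_8$. My first task would be to verify the centralizer hypothesis $G=C_D(D\cap N)N$ of Theorem~\ref{sfreduce:theorem}. The key observation is that any subgroup $Q\cong Q_8$ of $D=Q_8\times C_{2^n}$ must project isomorphically onto the $Q_8$-factor: otherwise the kernel of the projection $Q\to Q_8$ would be a non-trivial normal subgroup of $Q$ sitting inside $\{1\}\times C_{2^n}$, which forces $Q$ to be abelian and contradicts $Q\cong Q_8$. Hence $Q$ is the graph of a homomorphism $Q_8\to C_{2^n}$, and a direct calculation gives $C_D(Q)=Z(Q_8)\times C_{2^n}$, which surjects onto $D/Q$; so $G=C_D(D\cap N)N$. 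Theorem~\ref{sfreduce:theorem} then yields $\SOF(B)\le\SOF(b)$ and Lemma~\ref{Cartan_reduce:lemma}(i) yields ${\rm c}(B)\le 2^n{\rm c}(b)$, where $b$ is the covered block of $\cO N$ with defect group $Q_8$. Eisele's classification~\cite{ei16} of $\cO$-blocks with defect group $Q_8$ gives finitely many explicit Morita classes, and from the list one reads off ${\rm c}(b)\le 8$ together with a small absolute bound on $\SOF(b)$. This gives ${\rm c}(B)\le 2^{n+3}$ and $\SOF(B)\le(|D|^2)!$.

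In case (ii), $N$ is a quasisimple group of classical Lie type with a non-quasi-isolated covered block $b$, $D\cap N$ contains a $Q_8$-subgroup, and $G/N$ is $2$-nilpotent with cyclic Sylow $2$-subgroup $ND/N$ and supersolvable $O_{2'}$-quotient. Let $M$ be the preimage in $G$ of $O_{2'}(G/N)$, so $G/M$ is cyclic of $2$-power order and $M/N$ is supersolvable of odd order. I would apply Theorem~\ref{sfreduce:theorem} and Lemma~\ref{Cartan_reduce:lemma}(i) to the extension $M\lhd G$, the centralizer condition $G=C_D(D\cap M)M$ following by the same argument as in case (i) applied to the $Q_8$-subgroup of $D\cap M=D\cap N$; this reduces the problem to bounding invariants of the block $b_M$ of $\cO M$ covering $b$. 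For the odd supersolvable extension $N\lhd M$ I would adapt the Clifford-theoretic argument of Lemma~\ref{odd_index_Q_8xC2^n:lemma}, using the reduced hypothesis on $(G,B)$ to exclude nilpotent covered blocks. Finally, since $b$ is non-quasi-isolated in a classical quasisimple group, I would invoke the Bonnaf\'{e}--Dat--Rouquier Morita equivalence to reduce $b$ to a block of a proper Levi subgroup of $N$ with strictly smaller defect group, and conclude by induction on $|D|$ with base case $D\cong Q_8$ supplied by~\cite{ei16}. The main obstacle is case (ii): Lemma~\ref{odd_index_Q_8xC2^n:lemma} is tailored to defect group exactly $Q_8\times C_{2^n}$ and does not apply to $b_M$, whose defect group is a proper subgroup $Q_8\times C_{2^m}$ with $m<n$, so a variant of that Clifford argument must be developed, and the bounds have to be tracked carefully through the Bonnaf\'{e}--Dat--Rouquier reduction to ensure they remain controlled by the claimed explicit functions of $|D|$.
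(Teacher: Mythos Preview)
Your treatment of case (i) is correct and matches the paper's argument; the explicit verification of the centralizer hypothesis $G=C_D(D\cap N)N$ is a nice addition that the paper leaves implicit.

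In case (ii) there is a genuine gap. You write that Bonnaf\'e--Dat--Rouquier reduces $b$ to a block of a proper Levi subgroup \emph{with strictly smaller defect group}, and then propose induction on $|D|$. This is false: the Bonnaf\'e--Dat--Rouquier Morita equivalence~\cite[Theorem~7.7]{bdr17} produces a block with \emph{isomorphic} defect group, so no descent on $|D|$ is available and your induction does not terminate. The paper instead observes that $H_1$ is a \emph{proper} subgroup of $G$, so after reapplying Proposition~\ref{reduce:prop} the index $[G:O_{2'}(Z(G))]$ strictly decreases; the iteration therefore terminates, and necessarily in case (a)(i), where the explicit bounds are read off. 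Tracking the Cartan bound through these steps is straightforward because the factors $[G:O^2(G)]$ telescope against the shrinking $C_{2^m}$-part of the defect group.

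Your self-identified obstacle concerning Lemma~\ref{odd_index_Q_8xC2^n:lemma} is also handled more cleanly than you suggest. Rather than adapting the Clifford argument, the paper first passes to $O^2(G)$ (your $M$) via Theorem~\ref{sfreduce:theorem} and Lemma~\ref{Cartan_reduce:lemma}(i), and then uses the structure of the outer automorphism groups of classical groups to argue that every normal subgroup of $O^2(G)$ is already normal in $G$; hence $(O^2(G),B')$ is again a reduced pair satisfying (a)(ii), and Lemma~\ref{odd_index_Q_8xC2^n:lemma} applies to it directly with its own defect group $Q_8\times C_{2^m}$. No variant of the lemma is needed.
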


\begin{proof}
Morita equivalence preserves the Cartan invariants, and by~\cite[Proposition 3.12]{el19} Morita equivalence of $\cO$-blocks preserves $\SOF$, so by Proposition \ref{reduce:prop} it suffices to consider reduced blocks. Apply Proposition \ref{structureQ8xC2n:prop}.

First let $(G,B)$ be a reduced pair satisfying condition (a)(ii) of Proposition \ref{structureQ8xC2n:prop}. Let $B'$ be the unique block of $\cO O^2(G)$ covered by $B$. By Lemma~\ref{Cartan_reduce:lemma}(i) ${\rm c}(B) \leq [G:O^2(G)]{\rm c}(B')$ and by Theorem \ref{sfreduce:theorem} $\SOF(B) \leq \SOF(B')$. By considering the outer automorphism groups of the classical groups of Lie type (see for example~\cite[Table 5]{atlas}) we see that every normal subgroup of $O^2(G)$ is also normal in $G$, so $B'$ is quasiprimitive. Hence we see that $B'$ is also a reduced block satisfying (a)(ii). We may now assume that $O^2(G)=G$. Now let $B_N$ be the unique block of the quasisimple group $N$ covered by $B$. By Lemma \ref{odd_index_Q_8xC2^n:lemma} $B$ is Morita equivalent to $B_N$, and note that they share a defect group. Hence we may assume that $G=N$ since Morita equivalence preserves both of these invariants (see~\cite[Proposition 3.12]{el19} for the latter).

We make use of~\cite{bdr17}, to which we refer for notation. Assume for the moment that $G$ is a group of Lie type, i.e, the centre is largest possible. We note that $Z(G)$ is a $2$-group for the groups we are considering. Here the identity element is the only quasi-isolated element (see for example~\cite[Table 2]{bo05}) and so the principal block is the only quasi-isolated block.  However, for groups of these types the principal block of $G$ (or that of any quotient of $G$) cannot have the given defect groups, so we may assume our block is not quasi-isolated. We may now apply the Bonnaf\'e-Dat-Rouquier correspondence~\cite[Theorem 7.7]{bdr17}, so that $B$ is Morita equivalent to a block $C_1$ (with isomorphic defect group) of a proper subgroup $H_1$ of $G$. We note that the well-known error in~\cite{bdr17} does not apply in our situation, since we are working with $2$-blocks and the centre of $G$ is a $2$-group. If $G$ is not a group of Lie type (i.e., the centre is not largest possible), then we note that by, for example,~\cite[Proposition 4.1]{ekks14} the Bonnaf\'e-Dat-Rouquier correspondence induces a Morita equivalence modulo central $2$-subgroups and we may apply the same argument.

Applying Proposition \ref{reduce:prop} to $C_1$, it is Morita equivalent to a block $C_2$ in a reduced pair $(H_2,C_2)$ with isomorphic defect groups, where $[H_2:O_{2'}(Z(H_2))] \leq [H_1:O_{2'}(Z(H_1))]<[G:O_{2'}(Z(G))]$. We have ${\rm c}(B) = {\rm c}(C_1)$ and $\SOF(B)=\SOF(C_1)$. Now apply Proposition \ref{structureQ8xC2n:prop} to $(H_2,C_2)$. Either we are in case (a)(i) of Proposition \ref{structureQ8xC2n:prop} or we may repeat the above argument. Since the index of the $2'$-part of the centre strictly decreases each time we apply the Bonnaf\'e-Dat-Rouquier correspondence, repetition of this process must eventually end in case (a)(i) of Proposition \ref{structureQ8xC2n:prop}. 

Now let $(G,B)$ be a reduced pair satisfying condition (a)(i), so there is $N \lhd G$ such that $G=ND$ and $G \cap N \cong Q_8$. Let $b$ be the unique block of $\cO N$ covered by $B$, noting that this has defect group $Q_8$. By~\cite{ei16} there is a unique Morita equivalence class of blocks with defect group $Q_8$ and a given Cartan matrix, so $\mfO(b)=1$. By Theorem \ref{sfreduce:theorem} and~\cite[Corollary 3.11]{el19} $\SOF(B) \leq \SOF(b) \leq |Q_8|^2!$. Considering Cartan invariants, ${\rm c}(b) \leq 8$ and so, by Lemma~\ref{Cartan_reduce:lemma}(i), ${\rm c}(B) \leq 2^{n+3}$.
\end{proof}



We remark that we cannot at present so easily obtain a similar bound on the strong $\cO$-Frobenius number for blocks with defect group $Q_8 \times Q_8$, since we do not know how this invariant behaves with respect to normal subgroups of $p'$-index.

We further remark that in order to bound only the strong Frobenius number of a quasisimple group we could have used~\cite{fk18}.


\section{Donovan's conjecture for blocks with defect group $Q_8 \times C_{2^n}$ or $Q_8 \times Q_8$}
\label{sec:Donovan_cases}

We are now in a position to verify Donovan's conjecture for $Q_8 \times C_{2^n}$ and $Q_8 \times Q_8$.
\newline
\newline
{\sc Proof of Theorem}~\ref{Q8Ctheorem}.


By Proposition~\ref{reduce:prop}, in verifying Donovan's conjecture it suffices to consider reduced blocks.

Consider first $Q_8\times C_{2^n}$. From~\cite[Corollary 3.11]{eel19} we need only bound strong $\cO$-Frobenius numbers and Cartan invariants for reduced blocks with defect groups isomorphic to $Q_8\times C_{2^n}$, hence the result follows in this case by Corollary \ref{sfQ8xC:cor}.

Now consider blocks with defect groups $Q_8 \times Q_8$. By Proposition~\ref{structureQ8xC2n:prop} either: (i) there are normal subgroups $N \lhd H \lhd G$ with $H=ND$ and $D \cap N \cong Q_8$ or $Q_8 \times C_2$, and $[G:H]$ odd; or (ii) there are commuting $N_1, N_2 \lhd G$ with $N_1 \cap N_2 \leq Z(G)$ such that $D \cap N_1$, $D \cap N_2 \cong Q_8$ or $Q_8 \times C_2$, and $[G:N_1N_2]$ is odd.

By~\cite[Corollary 4.18]{ei18} it suffices to show that there are only finitely many possibilities for the Morita equivalence class of the unique block of $\cO H$ or $\cO N_1 N_2$ covered by $B$. Hence we may assume that $G=H$ in case (i) and $G=N_1N_2$ in case (ii).

As above, from~\cite[Corollary 3.11]{eel19} we need only bound strong $\cO$-Frobenius numbers and Cartan invariants for  such blocks. 

In case (i) we have $\SOF(B) \leq 16^2!$ and ${\rm c}(B) \leq 2^6$ using arguments as in Corollary \ref{sfQ8xC:cor}, noting that $D\cong Q_8 \times Q_8$ with $D\cap N\cong Q_8$ or $Q_8 \times C_2$ satisfies the conditions of Theorem \ref{sfreduce:theorem}. Suppose that we are in case (ii). Note that $G \cong (N_1 \times N_2)/W$ for some group $W \leq Z(N_1 \times N_2)$. Now $B$ corresponds to a block $A$ of $N_1 \times N_2$ with $O_{2'}(W)$ in its kernel and defect group $Q_8 \times Q_8$, $Q_8 \times Q_8 \times C_2$ or $Q_8\times Q_8\times C_2\times C_2$. Write $A_i$ for the block of $N_i$ covered by $A$, with defect group $Q_8$ or $Q_8 \times C_2$. As above we have $\SOF(A_i)\leq 16^2!$, for $i=1,2$. Hence $\SOF(A) \leq (16^2!)^2$. By~\cite[Proposition 3.17]{el19} we have $\SOF(B) \leq \SOF(A)$. Finally we have ${\rm c}(A_i) \leq 16$, for $i=1,2$, so by Lemma~\ref{Cartan_reduce:lemma}(ii) ${\rm c}(B) \leq {\rm c}(A) \leq 16^2$.
\hfill $\Box$

\medskip

The authors are aware of only three Morita equivalence classes of blocks with defect group $Q_8 \times C_{2^n}$ for each $n \geq 0$, namely those with representatives the principal blocks of $\cO (Q_8 \times C_{2^n})$, $\cO (SL_2(3) \times C_{2^n})$ and $\cO(SL_2(5) \times C_{2^n})$. Similarly the known Morita equivalence classes of blocks with defect group $Q_8 \times Q_8$ have representatives the principal blocks of $\cO(Q_8 \times Q_8)$, $\cO(Q_8 \times SL_2(3))$, $\cO((Q_8 \times Q_8)\rtimes C_3)$ (SmallGroup(192,1022), where $C_3$ acts on $(Q_8 \times Q_8)/Z(Q_8 \times Q_8)$ with only one fixed point), $\cO(Q_8 \times SL_2(5))$, $\cO(SL_2(3) \times SL_2(3))$, $\cO(SL_2(3) \times SL_2(5))$ and $\cO(SL_2(5) \times SL_2(5))$, and a non-principal block of $(Q_8 \times Q_8) \rtimes 3_+^{1+2}$, where the centre of $3_+^{1+2}$ acts trivially.


\bigskip

\begin{center} ACKNOWLEDGEMENTS \end{center}

We thank Mandi Schaeffer-Fry, Lucas Ruhstorfer and Jay Taylor for some useful conversations. This research was supported by EPSRC grant no. EP/T004606/1.

\end{document}